\newcommand{\innprod}[2]{\left< #1, #2 \right>}
\newcommand{\R}{\mathbb{R}}
\newcommand{\N}{\mathbb{N}}
\renewcommand{\complement}[1]{#1^\mathrm{C}}
\DeclareMathOperator{\Hess}{Hess}
\DeclareMathOperator{\lin}{lin}
\DeclareMathOperator{\Dom}{Dom}
\newtheorem{theorem}{Theorem}
\newtheorem{lemma}[theorem]{Lemma}
\newtheorem{proposition}[theorem]{Proposition}
\theoremstyle{definition}
\theoremstyle{remark}
\numberwithin{equation}{section}
\begin{document}
	
	\title[Dimension-free estimates for Riesz transforms]{Dimension-free estimates for Riesz transforms related to the harmonic oscillator}
	
	\author{Maciej Kucharski}
	
	\address{Maciej Kucharski\\
		Instytut Matematyczny\\
		Uniwersytet {Wrocławski}\\
		Plac Grun\-waldzki 2/4\\
		50-384 {Wrocław}\\
		Poland}
	\email{mkuchar@math.uni.wroc.pl}
	
	\subjclass[2010]{42C10, 42A50, 33C50}
	
	\keywords{Riesz transform, Hermite expansions, Bellman function}
	
	\begin{abstract}
		We study $L^p$ bounds for two kinds of Riesz transforms on $\R^d$ related to the harmonic oscillator. We pursue an explicit estimate of their $L^p$ norms that is independent of the dimension $d$ and linear in $\max(p, p/(p-1))$.
	\end{abstract}
	
	\maketitle
	
	\section{Introduction}
	
	The aim of this paper is to prove a dimension-free estimate for the $L^p$ norm of vectors of a specific kind of generalized Riesz transforms. Recall that the classical Riesz transforms on $\R^d$ are the operators
	\[
	R_if(x) = \partial_{x_i} \left(-\Delta \right)^{-1/2}f(x), \quad i=1, \dots, d.
	\]
	A well-known result concerning Riesz transforms, proved by Stein in \cite{stein}, is the $L^p$ boundedness of the vector of the Riesz transforms
	\[
	\mathbf{R} f = \left( R_1f, \dots, R_d f \right)
	\]
	with a norm estimate independent of $d$. Since then, the question about dimension-free estimates for the Riesz transforms has been asked in various contexts. For example Carbonaro and Dragi\v{c}ević proved in \cite{carbonaro_dragicevic} a dimension-free estimate with an explicit constant for the shifted Riesz transform on a complete Riemannian manifold. Another path of generalizing the result of Stein is to consider operators of the form
	\begin{equation} \label{riesz}
	R_i = \delta_i L^{-1/2},
	\end{equation}
	where $\delta_i$ is an operator on $L^2(\R^d)$ and
	\begin{equation} \label{eq:L2}
	L = \sum_{i=1}^d L_i = \sum_{i=1}^d \left( \delta_i^* \delta_i + a_i \right), \quad a_i \geqslant 0.
	\end{equation}
	Such Riesz transforms were studied systematically by Nowak and Stempak in \cite{nowak_stempak}. We will focus on the Riesz transforms of the form as in \eqref{riesz} where 
	\begin{equation} \label{eq:L}
	L = -\Delta + \abs{x}^2
	\end{equation}
	is the harmonic oscillator, i.e.
	\begin{equation} \label{eq:delta}
	\delta_i = \partial_{x_i} + x_i, \quad \delta_i^* = -\partial_{x_i} + x_i, \quad a_i = 1,
	\end{equation}
	which means that
	\begin{equation} \label{eq:delta2}
	\begin{aligned}
	\delta_i \delta_i^* &= -\partial_{x_i x_i}^2 + (x_i^2 + 1) \\
	\delta_i^* \delta_i &= -\partial_{x_i x_i}^2 + (x_i^2 - 1). \\
	\end{aligned}
	\end{equation}
	From this point $\delta_i$ and $\delta_i^*$ are defined as above.
	
	This so-called \emph{Hermite-Riesz transform} was introduced by Thangavelu in \cite{thangavelu}, who proved its $L^p$ boundedness. Then a dimension-free estimate of its norm was proved in \cite{harboure} and \cite{lust_piquard}, which later was sharpened by Dragi\v{c}ević and Volberg in \cite{dragicevic_volberg} to an estimate linear in $\max(p, p/(p-1))$. 
	
	In the first part we will give a result analogous to \cite[Theorem 9]{wrobel}, however concerning a slightly altered operator, namely
	\[
	R_i' = \delta_i^* L'^{-1/2}
	\]
	with
	\[
	L_i' = \delta_i\delta_i^* + 1, \quad L' = \sum_{i=1}^d L_i'.
	\]
	It arises as a result of swapping $\delta_i$ and $\delta_i^*$ in formulae \eqref{riesz} and \eqref{eq:L2}. As explained in Section \ref{sec3}, the results from \cite{wrobel} do not apply to this operator. The key step in the proof is, as in \cite{wrobel}, the method of Bellman function but we use its more subtle properties to achieve the goal.
	
	In the second part we consider the vector of the Riesz transforms
	\[
	{\mathbf{\tilde{R}}} f = \left( \tilde{R}_1 f, \dots, \tilde{R}_d f \right),
	\]
	where
	\[
	\tilde{R}_i = \delta_i^*L^{-1/2}.
	\]
	Its boundedness was proved in \cite{dragicevic_volberg} (where $\tilde{R}_i$ was denoted by $R_i^*$), \cite{harboure} and \cite{lust_piquard} with an implicit constant independent of the dimension. Our goal is to give an explicit constant. Due to reasons explained in Section \ref{sec4} we will focus on proving the boundedness of the operator $S$ defined as
	\[
	Sf(x) = \abs{x} L^{-1/2}f(x).
	\]
	We obtain it by an explicit estimate of the kernel of $S$. As a corollary we get a dimension-free estimate of the norm of the vector of the operators
	\[
	R_i^* = \delta_i^* (L+2)^{-1/2}
	\]
	with each $R_i^*$ being the adjoint of $R_i = \delta_i L^{-1/2}$ studied in \cite{dragicevic_volberg} and \cite{wrobel}.

	\section{Preliminaries}
	For the sake of simplicity we consider only real-valued functions. All results can be extended to complex-valued functions with twice as large constants.
	
	In order to define the operators $L'$, $L$, $R_i'$ and $\tilde{R}_i$ on $L^2(\R^d)$ (later abbreviated as $L^2$) we introduce the Hermite polynomials and the Hermite functions. The Hermite polynomials are given by
	\[
	H_n(x) = (-1)^n e^{x^2} \dv[n]{x} e^{-x^2}, \ x \in \R
	\]
	or, equivalently, by
	\[
	H_n(x) = 2 x H_{n-1}(x) - 2 (n-1) H_{n-2}(x), \quad n \geqslant 2, \ x \in \R,
	\]
	\[
	H_0(x) = 1, \ H_1(x) = 2x.
	\]
	The Hermite functions are
	\[
	h_n(x) =  \frac{1}{\sqrt{2^n n! \sqrt{\pi}}} e^{-x^2/2} H_n(x), \ x \in \R.
	\]
	It is well known that the Hermite functions form an orthonormal basis of $L^2(\R)$ and that their linear span is dense in $L^p(\R)$ for every $1 \leqslant p < \infty$.
	
	For $n = (n_1, \dots, n_d) \in \N^d$ with $\N = \{0, 1, 2 \dots \}$ and $x = (x_1, \dots, x_d) \in \R^d$ we define
	\[
	h_n(x) = h_{n_1}(x_1) \cdots h_{n_d}(x_d).
	\]
	We can see that $\{ h_n \}_{n \in \N^d}$ is an orthonormal basis of $L^2$. Throughout the paper we will use 
	\[
	\mathcal{D} = \lin \{ h_n: n \in \N^d \}.
	\]
	
	Recalling \eqref{eq:delta}, on $C_c^\infty(\R^d)$ we define
	\begin{align*}
	L_i = \delta_i^* \delta_i + 1 \qquad &\text{and} \qquad L = \sum_{i=1}^d L_i \\
	L_i' = \delta_i \delta_i^* + 1 \qquad &\text{and} \qquad L' = \sum_{i=1}^d L_i'.
	\end{align*}
	We remind the reader that $\delta_i^* \delta_i$ and $\delta_i \delta_i^*$ were calculated in \eqref{eq:delta2}. Since $\delta_i \delta_i^* = \delta_i^* \delta_i + 2$, we can also write
	\begin{equation} \label{eq:l'}
	L' = L+2d.
	\end{equation}
	
	Note that the formal adjoint of $\delta_i$ with respect to the inner product on $L^2$ is $\delta_i^* = -\partial_{x_i} + x_i$. We recall well-known relations concerning the Hermite functions.
	\begin{lemma} \thlabel{fact}
		For $n \in \N^d$ and $i = 1, \dots, d$ we have
		\begin{enumerate}
			\item $\delta_i h_n = 
			\begin{cases}
			\sqrt{2n_i} h_{n-e_i} \quad &\text{if } n_i \neq 0 \\
			0 &\text{otherwise}
			\end{cases}$ \label{eq:fact1}
			\item $\delta_i^* h_n = \sqrt{2(n_i+1)} h_{n+e_i}$, \label{eq:fact2}
			\item $L'_i h_n = (2n_i+3) h_n$, \label{eq:fact3}
			\item $L_i  h_n = (2n_i+1) h_n$. \label{eq:fact4}
		\end{enumerate}
	\end{lemma}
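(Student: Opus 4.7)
The plan is to reduce everything to one dimension using the product structure of the Hermite basis. Since $h_n(x) = h_{n_1}(x_1)\cdots h_{n_d}(x_d)$ and both $\delta_i$ and $\delta_i^*$ act only in the $x_i$ variable, they pass through the other factors. So statements (1) and (2) will follow once I establish the one-dimensional versions
\[
(\partial_x + x)h_n = \sqrt{2n}\, h_{n-1}, \qquad (-\partial_x + x)h_n = \sqrt{2(n+1)}\, h_{n+1},
\]
with the convention $h_{-1} \equiv 0$. Statements (3) and (4) are then immediate compositions: $\delta_i^* \delta_i h_n = 2n_i h_n$ and $\delta_i \delta_i^* h_n = 2(n_i+1) h_n$, so adding $1$ gives $(2n_i+1)h_n$ and $(2n_i+3)h_n$ respectively.

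For the one-dimensional identities I would write $h_n = c_n e^{-x^2/2} H_n$ with $c_n = (2^n n! \sqrt{\pi})^{-1/2}$ and exploit the two standard facts about Hermite polynomials, namely
\[
H_n'(x) = 2n H_{n-1}(x) \qquad \text{and} \qquad H_{n+1}(x) = 2x H_n(x) - H_n'(x),
\]
the second of which is just the recurrence stated in the excerpt rewritten using the first. Applying $\partial_x + x$ to $c_n e^{-x^2/2} H_n$ kills the $-x H_n$ term coming from $\partial_x e^{-x^2/2}$, leaving $c_n e^{-x^2/2} H_n'(x) = 2n\, c_n e^{-x^2/2} H_{n-1}$; the ratio $c_n/c_{n-1} = 1/\sqrt{2n}$ converts this into $\sqrt{2n}\, h_{n-1}$. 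Applying $-\partial_x + x$ analogously produces $c_n e^{-x^2/2}(2xH_n - H_n') = c_n e^{-x^2/2} H_{n+1}$, and $c_n/c_{n+1} = \sqrt{2(n+1)}$ gives the claimed formula. The case $n = 0$ in (1) is handled by noting that $H_0' = 0$.

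There is no real obstacle here: the argument is purely a bookkeeping exercise with the normalization constants $c_n$ and the two classical Hermite identities above. The only minor point to be careful about is the sign pattern in computing $(\pm\partial_x + x) e^{-x^2/2} H_n$, which is what produces the cancellation leading to $H_n'$ in one case and the recurrence $2xH_n - H_n'$ in the other.
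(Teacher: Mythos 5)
Your argument is correct. Note, however, that the paper offers no proof of this lemma at all --- it is stated as a recollection of well-known relations about Hermite functions --- so there is nothing to compare against; your reduction to the one-dimensional identities $(\partial_x+x)h_n=\sqrt{2n}\,h_{n-1}$ and $(-\partial_x+x)h_n=\sqrt{2(n+1)}\,h_{n+1}$ via $H_n'=2nH_{n-1}$ and $H_{n+1}=2xH_n-H_n'$, followed by composing to get items (3) and (4), is exactly the standard verification the paper implicitly relies on. The only point you lean on that is not in the paper's stated definitions is $H_n'=2nH_{n-1}$, which, if you want full self-containment, follows in one line from the Rodrigues formula $H_n=(-1)^n e^{x^2}\dv[n]{x}e^{-x^2}$.
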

	Hence, the multivariate Hermite functions $\{h_n\}_{n \in \N^d}$ are eigenvectors of $L'$ and $L$ corresponding to positive eigenvalues $\{\lambda_n'\}_{n \in \N^d}$ and $\{\lambda_n\}_{n \in \N^d}$ respectively, namely
	\begin{equation} \label{eq:eigen}
	\begin{aligned}
	Lh_n = \lambda_n h_n \qquad &\text{for} \qquad \lambda_n = 2\abs{n}+d \\
	L' h_n = \lambda_n' h_n \qquad &\text{for} \qquad \lambda_n' = 2\abs{n}+3d
	\end{aligned}
	\end{equation}
	By a well-known result due to Kato (see \cite[p. 137]{kato}) the operators $L$ (and $L'$) are essentially self-adjoint on $C_c^\infty(\R^d)$ with the self-adjoint extensions given by
	\[
	Lf = \sum_{n \in \N^d} \lambda_n \innprod{f}{h_n} h_n \qquad \text{and}
	\qquad
	L'f = \sum_{n \in \N^d} \lambda_n' \innprod{f}{h_n} h_n,
	\]
	where $\innprod{\cdot}{\cdot}$ denotes the $L^2$ inner product, acting on the domains
	\[
	\Dom(L) = \{ f \in L^2 : \sum_{n \in \N^d} \lambda_n^2 \abs{\innprod{f}{h_n}}^2 < \infty \},
	\]
	\[
	\Dom(L') = \{ f \in L^2 : \sum_{n \in \N^d} \lambda_n'^2 \abs{\innprod{f}{h_n}}^2 < \infty \}.
	\]
	Then $R_i' = \delta_i^* L'^{-1/2}$ can be defined rigorously as
	\[
	R_i' f = \sum_{n \in \N^d} \lambda_n'^{-1/2} \innprod{f}{h_n} \delta_i^* h_n
	\]
	and $\tilde{R}_i = \delta_i^*L^{-1/2}$ as
	\[
	\tilde{R}_i f = \sum_{n \in \N^d} \lambda_n^{-1/2} \innprod{f}{h_n} \delta_i^* h_n.
	\]
	It is clear that $R_i'$ and $\tilde{R}_i$ are bounded on $L^2$.
	
	In what follows we will often identify a densely defined bounded operator on a Banach space with its unique bounded extension to the whole space. As for the notation, we will abbreviate
	\[
	L^p = L^p(\R^d), \quad \norm{\cdot}_p = \norm{\cdot}_{L^p} \quad \text{and} \quad \norm{\cdot}_{p \rightarrow p} = \norm{\cdot}_{L^p \rightarrow L^p}
	\]
	and for $x = \left(x_1, \dots, x_d \right) \in \R^d$ we will use $\abs{x} = \left( \sum_{i=1}^d x_i^2 \right)^{1/2}$. For $1 < p < \infty$ we denote $p^* = \max \left(p, \frac{p}{p-1} \right)$.
	
	\section{Riesz transforms of the first kind} \label{sec3}
	
	Let $\mathbf{R'} f = \left( R_1' f, \dots, R_d' f \right)$. The main result of this section is an explicit estimate for the $L^p$ norm of $\mathbf{R'}$.
	
	\begin{theorem} \thlabel{th1}
		For $f \in L^p$ we have
		\[
		\norm{\mathbf{R'} f}_p \coloneqq \left( \int_{\R^d} \abs{\mathbf{R'}f(x)}^p \, dx \right)^{1/p} \leqslant 36 (p^* - 1) \norm{f}_p.
		\]
	\end{theorem}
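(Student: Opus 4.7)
I plan to follow the Bellman--function framework of Wr\'obel \cite{wrobel}, adapted to handle the raising operator $\delta_i^*$ in place of the lowering operator $\delta_i$. By $L^p$--$L^{p'}$ duality,
\[
\norm{\mathbf{R'}f}_p = \sup \Bigl\{ \sum_{i=1}^d \innprod{R_i'f}{g_i} : \mathbf{g} = (g_1, \ldots, g_d) \in \mathcal{D}^d,\ \norm{\mathbf{g}}_{p'}\leqslant 1 \Bigr\},
\]
so it suffices to bound the bilinear form $\sum_i \innprod{R_i'f}{g_i}$ by $36(p^*-1)\norm{f}_p\norm{\mathbf{g}}_{p'}$, after which the theorem follows by density.

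\textbf{Space--time formulation.} Using the subordination
\[
L'^{-1/2} = \frac{1}{\sqrt{\pi}}\int_0^\infty t^{-1/2} e^{-tL'}\,dt,
\]
or equivalently passing to the Poisson extensions $u(t,x) = P'_t f(x)$ and $v_i(t,x) = P'_t g_i(x)$ with $P'_t = e^{-t\sqrt{L'}}$, I would convert $\sum_i \innprod{R_i'f}{g_i}$ into a weighted space--time integral over $(0,\infty)\times\R^d$ of a mixed bilinear expression involving $\delta_i^*u$ and $\partial_t v_i$. The commutation $L'\delta_i^* = \delta_i^*(L'+2)$ (immediate from \thref{fact}) gives $e^{-tL'}\delta_i^* = e^{-2t}\delta_i^* e^{-tL'}$, which governs the interplay between $\delta_i^*$ and the semigroup and in particular makes the relevant space--time integral absolutely convergent.

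\textbf{Bellman function and conclusion.} I would then seek $B = B(\xi,\eta,X,Y)$ on $\{X\geqslant \abs{\xi}^p,\ Y\geqslant \abs{\eta}^{p'}\}$ satisfying a size bound $0\leqslant B \leqslant C(X+Y)$ with $C = O(p^*)$ together with a Hessian inequality that dominates the mixed term $\sum_i |\delta_i^*u \cdot \partial_t v_i|$ by the negative Hessian $-d^2 B$ evaluated along the Poisson flow. Plugging this into the space--time bilinear form and integrating by parts in $t$ (using $(\partial_t^2 - L')u = 0$ and likewise for $v_i$) would reduce the bilinear estimate to boundary values at $t=0$ and $t=\infty$, controlled by the size bound, producing the constant $36(p^*-1)$ after careful bookkeeping. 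The main obstacle is the construction of $B$: the Bellman function used in \cite{wrobel} is tailored to the lowering operator $\delta_i$, for which the commutator $L\delta_i = \delta_i(L-2)$ provides a \emph{downward} shift; in our setting the corresponding shift $L'\delta_i^* = \delta_i^*(L'+2)$ is \emph{upward}, so the drift term produced in the Hessian estimate has the opposite sign. A naive transfer of Wr\'obel's Bellman function therefore fails, and one must incorporate into $B$ an additional positive term---plausibly exploiting the positivity of the $\abs{x}^2$ potential in $\delta_i\delta_i^*$ recorded in \eqref{eq:delta2}, or an auxiliary factor tied to the damping $e^{-2t}$ above---that absorbs this wrong-sign drift while keeping the size and convexity constants linear in $p^*$. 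This refined Bellman function is the key technical ingredient; Steps 1 and 3 are then essentially routine applications of the Dragi\v cevi\'c--Volberg heat-flow machinery.
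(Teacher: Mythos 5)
Your outline correctly identifies the obstruction (the upward shift $L'\delta_i^*=\delta_i^*(L'+2)$, i.e.\ $[\delta_i^*,\delta_i]=-2<0$, which breaks Wr\'obel's hypotheses), but it stops exactly where the actual proof has to begin: you never construct the Bellman function, nor do you prove the inequality that absorbs the wrong-sign drift, writing only that one must ``plausibly'' add a positive term to $B$. That is the entire content of the theorem, and the paper's resolution is different from what you guess: no new term is added to $B$ at all. One keeps the unmodified Nazarov--Treil biradial function $B(\zeta,\eta)=\tfrac12\beta(\abs{\zeta},\abs{\eta})$ and exploits a \emph{stronger} gradient estimate it already satisfies, namely item \ref{eq:bellman4}.\ of \thref{bellman}, which contains the extra zeroth-order term $+B_\kappa(z)$. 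In the heat-flow computation the potential $r(x)=\abs{x}^2+2d$ multiplies this term, and the wrong-sign contribution $-2\sum_i\partial_{\eta_i}B_\kappa(u)\,Q_tg_i$ produced by the shift is then dominated pointwise via inequality \eqref{eq:2}, $r(x)B(u)\geqslant 2\sum_i\partial_{\eta_i}B(u)\,Q_tg_i$, which is verified by a direct two-case computation using $r(x)\geqslant 2d$ and holds only for $d\geqslant 2$. Consequently the bilinear embedding (\thref{bet}) is proved for $d\geqslant 2$, and the case $d=1$ must be treated separately: the paper shows $R_1'=\delta_1^*L'^{-1/2}$ is the adjoint of $R_1=\delta_1L^{-1/2}$ and invokes Wr\'obel's theorem. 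Your proposal has no substitute for either of these steps.

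There is also a structural flaw in your space--time formulation. You propose to use the same Poisson extension $P_t'=e^{-t\sqrt{L'}}$ for both $f$ and the $g_i$; with that choice the subordination does not close to an exact identity for $R_i'$, precisely because $\delta_i^*$ shifts eigenvalues by $+2$ (your $e^{-2t}$ damping remark gestures at this but does not fix it). The paper instead pairs $P_t=e^{-t\sqrt{L'}}$ on the $f$-side with the auxiliary semigroup $Q_t=e^{-t\sqrt{M}}$, $M=L'-2$, on the $g$-side, so that $\lambda_k'-2=\lambda_n'$ when $k=n+e_i$ and one gets the exact representation $\innprod{R_i'f}{g}=-4\int_0^\infty\innprod{\delta_i^*P_tf}{\partial_tQ_tg}\,t\,dt$ (\thref{lem2}); the constant $36(p^*-1)$ then comes from the explicit factor $4\sqrt2$ times the $6(p^*-1)$ of \thref{bet}, not from unspecified ``bookkeeping.'' As written, your argument cannot be completed without supplying the shifted semigroup, the inequality \eqref{eq:2} (or an equivalent mechanism), and the separate $d=1$ argument.
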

	
	In order to prove \thref{th1}, we will need some auxiliary objects. One can see that $L_i' = -\partial_{x_i}^2 + x_i^2 + 2$, so we can write
	\begin{equation} \label{eq:r}
	-\Delta = -\sum_{i=1}^d \partial_{x_i}^2 = L' - r, \quad \text{ where } r(x) = \abs{x}^2 + 2d.
	\end{equation}
	
	We will also need the operator $M$ defined on $C_c^\infty(\R^d)$ as
	\[
	M = L' + [\delta_i^*, \delta_i] = L' - 2,
	\]
	where
	\[
	[\delta_i^*, \delta_i] = \delta_i^* \delta_i - \delta_i \delta_i^*.
	\]
	Note that in our case $[\delta_i^*, \delta_i] = -2 < 0$. This means that the crucial assumption from \cite{wrobel} does not hold and the theory does not apply.
	
	Having these operators, we can introduce the semigroups
	\[
	P_t = e^{-t L'^{1/2}} \quad \text{ and } \quad Q_t = e^{-t M^{1/2}}
	\]
	rigorously defined as
	\begin{equation} \label{eq:semigroups}
	P_t f = \sum_{n \in \N^d} e^{-t \lambda_n'^{1/2}} \innprod{f}{h_n} h_n, \quad Q_t f = \sum_{n \in \N^d} e^{-t (\lambda_n'-2)^{1/2}} \innprod{f}{h_n} h_n.
	\end{equation}
	For $f = (f_1, \dots, f_d)$ we write
	\[
		Q_t f = \left( Q_t f_1, \dots, Q_t f_d \right).
	\]
	
	\begin{lemma} \thlabel{lem2}
		Let $i = 1, \dots, d$. If $f, g \in \mathcal{D}$, then
		\begin{equation} \label{eq:lem3}
		\innprod{R_i' f}{g} = -4 \int_0^\infty \innprod{\delta_i^* P_t f}{\partial_t Q_t g} t \, dt.
		\end{equation}
	\end{lemma}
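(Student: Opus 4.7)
The plan is to prove the identity by expanding both sides in the Hermite basis and matching coefficients, exploiting the shift relation between the eigenvalues of $L'$ and $M$ that is built into the definition of $Q_t$.

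First I would write $f = \sum_n f_n h_n$ and $g = \sum_n g_n h_n$ as \emph{finite} linear combinations (legitimate since $f,g \in \mathcal{D}$), so that all interchanges of sums, derivatives and integrals below are automatic. Then by \thref{fact}\eqref{eq:fact2} and the definition of $R_i'$,
\[
\innprod{R_i' f}{g} = \sum_n \lambda_n'^{-1/2}\, f_n \sqrt{2(n_i+1)}\, g_{n+e_i},
\]
which is the target expression.

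Next I would compute the inner product under the integral on the right-hand side using \eqref{eq:semigroups}. Applying $\delta_i^*$ to $P_t f$ via \thref{fact}\eqref{eq:fact2} gives
\[
\delta_i^* P_t f = \sum_n e^{-t\lambda_n'^{1/2}} f_n \sqrt{2(n_i+1)}\, h_{n+e_i},
\]
while differentiating $Q_t g$ in $t$ yields
\[
\partial_t Q_t g = -\sum_n (\lambda_n'-2)^{1/2} e^{-t(\lambda_n'-2)^{1/2}} g_n h_n.
\]
Taking the $L^2$ inner product and relabelling, the key observation is that $\lambda_{n+e_i}' = 2|n+e_i|+3d = \lambda_n'+2$, so $(\lambda_{n+e_i}'-2)^{1/2} = \lambda_n'^{1/2}$. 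This is precisely the point of subtracting $2$ in $M = L'-2$, compensating for the commutator $[\delta_i^*,\delta_i]=-2$. Consequently the two exponentials collapse into $e^{-2t\lambda_n'^{1/2}}$ and
\[
\innprod{\delta_i^* P_t f}{\partial_t Q_t g} = -\sum_n \lambda_n'^{1/2}\, e^{-2t\lambda_n'^{1/2}}\, f_n \sqrt{2(n_i+1)}\, g_{n+e_i}.
\]

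Finally I would multiply by $-4t$, integrate term by term, and evaluate $\int_0^\infty t\, e^{-2t\lambda_n'^{1/2}} dt = (4\lambda_n')^{-1}$; the factor $4\lambda_n'^{1/2}\cdot (4\lambda_n')^{-1} = \lambda_n'^{-1/2}$ then gives exactly the expansion of $\innprod{R_i' f}{g}$ obtained above. There is no real obstacle here beyond the bookkeeping: the content of the lemma is essentially the algebraic identity $\lambda_{n+e_i}'-2=\lambda_n'$ combined with the elementary formula $\int_0^\infty t e^{-at}dt = a^{-2}$, and the role of $M$ is exactly to make this identification work.
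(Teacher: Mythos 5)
Your proof is correct and follows essentially the same route as the paper: expand $f$ and $g$ in the Hermite basis (the paper reduces to $f=h_n$, $g=h_k$ by linearity, which is equivalent to your finite-sum bookkeeping), use $\delta_i^* h_n = \sqrt{2(n_i+1)}\,h_{n+e_i}$ together with the eigenvalue shift $\lambda_{n+e_i}' - 2 = \lambda_n'$, and evaluate the elementary integral $\int_0^\infty t e^{-at}\,dt = a^{-2}$. The only cosmetic difference is that the paper keeps general indices $n,k$ and shows the difference of the two sides vanishes, whereas you collapse the exponentials first; the substance is identical.
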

	
	\begin{proof}
		The proof is similar to the proof of \cite[Proposition 2]{wrobel} but we give it for the sake of completeness. By linearity it is sufficient to prove the lemma for $f = h_n$ and $g = h_k$ for some $n,k \in \N^d$. We proceed as follows:
		\begin{align*}
		-4 \int_0^\infty \innprod{\delta_i^* P_t h_n}{\partial_t Q_t h_k} \, t \, dt &= -4 \int_0^\infty \innprod{e^{-t \lambda_n'^{1/2}} \delta_i^* h_n}{-(\lambda_k'-2)^{1/2} e^{-t(\lambda_k'-2)^{1/2}} h_k} t \, dt \\
		&= 4 (\lambda_k'-2)^{1/2} \innprod{\delta_i^* h_n}{h_k} \int_0^\infty e^{-t (\lambda_n'^{1/2} + (\lambda_k'-2)^{1/2})} \, t \, dt \\
		&= \frac{4(\lambda_k'-2)^{1/2}}{\left( \lambda_n'^{1/2} + (\lambda_k'-2)^{1/2} \right)^2} \innprod{\delta_i^* h_n}{h_k}.
		\end{align*}
		Hence, we get
		\begin{align*}
		&\innprod{\delta_i^* L'^{-1/2} h_n}{h_k} + 4 \int_0^\infty \innprod{\delta_i^* P_t h_n}{\partial_t Q_t h_k} t \, dt \\
		&= \lambda_n'^{-1/2} \innprod{\delta_i^* h_n}{h_k} - \frac{4(\lambda_k'-2)^{1/2}}{\left( \lambda_n'^{1/2} + (\lambda_k'-2)^{1/2} \right)^2} \innprod{\delta_i^* h_n}{h_k} \\
		&= \left( \lambda_n'^{-1/2} - \frac{4(\lambda_k'-2)^{1/2}}{\left( \lambda_n'^{1/2} + (\lambda_k'-2)^{1/2} \right)^2} \right) \innprod{\delta_i^* h_n}{h_k} \\
		&= \sqrt{2(n_i+1)}\left( \lambda_n'^{-1/2} - \frac{4(\lambda_k'-2)^{1/2}}{\left( \lambda_n'^{1/2} + (\lambda_k'-2)^{1/2} \right)^2} \right) \innprod{h_{n+e_i}}{h_k}.
		\end{align*}
		In the last equality we used item \ref{eq:fact2}. of \thref{fact}.
		If $k = n+e_i$, then by \eqref{eq:eigen} we have $\lambda_k' = \lambda_n' + 2$ and the expression in parentheses is 0, otherwise $h_{n+e_i}$ and $h_k$ are orthogonal.
		
	\end{proof}
	
	We will also need a bilinear embedding theorem. Before stating it, for $f = \left(f_1, \dots, f_N \right): \R^d \times (0, \infty) \rightarrow \R^N$ we set
	\begin{align*}
	\abs{f(x,t)}_*^2 &= r(x) \abs{(f_1(x,t), \dots, f_N(x,t))}^2 \\
	&+ \abs{(\partial_t f_1(x,t), \dots, \partial_t f_N(x,t))}^2 \\
	&+ \sum_{i=1}^d \abs{(\partial_{x_i} f_1(x,t), \dots, \partial_{x_i} f_N(x,t))}^2.
	\end{align*}
	
	\begin{theorem} \thlabel{bet}
		Let $d \geqslant 2$, $1 < p < \infty$ and $q = \frac{p}{p-1}$. If $f \in \mathcal{D}$ and $g = \left( g_1, \dots, g_d \right)$ with $g_i \in \mathcal{D}$ for $i = 1, \dots, d$, then we have
		\[
		\int_0^\infty \int_{\R^d} \abs{P_t f(x)}_* \abs{Q_t g(x)}_* \, dx \, t \, dt \leqslant 6(p^*-1) \norm{f}_p\norm{g}_q.
		\]
	\end{theorem}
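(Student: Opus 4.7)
I would prove \thref{bet} by adapting the Bellman function method for bilinear embedding, following the template of Nazarov--Treil--Volberg as used by Dragi\v{c}evi\'c--Volberg in \cite{dragicevic_volberg} and Wr\'obel in \cite{wrobel}. The critical structural observation is that, although $P_t$ and $Q_t$ are generated by \emph{different} operators, the potentials of $L'$ and $M$ differ by exactly $2 = -[\delta_i^*, \delta_i]$, and the semigroup $Q_t$ was set up with $M = L' - 2$ precisely so that this constant will cancel in the final Bellman computation.

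The main step is to select a Bellman function $b \colon \mathbb{R} \times \mathbb{R}^d \to [0, \infty)$, of the Nazarov--Treil--Volberg type used in \cite{dragicevic_volberg} and \cite{wrobel}, satisfying (i) a size bound $b(\zeta, \eta) \leq K \bigl( |\zeta|^p/p + |\eta|^q/q \bigr)$, (ii) a generalized convexity producing the constant $p^* - 1$ in a schematic inequality
\[
\mathcal{H}_b\bigl[(\xi, \eta)\bigr] \geq \tfrac{c_0}{p^* - 1}\, |\xi|\, |\eta|, \qquad \xi \in \mathbb{R},\ \eta \in \mathbb{R}^d,
\]
and (iii) a compatible positivity $\zeta\, b_\zeta + \eta \cdot b_\eta \geq 0$ to force the $r(x)$-weighted contribution below to have the correct sign.

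Now set $u(x,t) = P_t f(x)$, $v(x,t) = Q_t g(x)$ and $\mathcal{B}(t) = \int_{\mathbb{R}^d} b(u(x,t), v(x,t))\, dx$. Spectral calculus together with \eqref{eq:r} gives $u_{tt} + \Delta u = r(x) u$ and $v_{i,tt} + \Delta v_i = (r(x) - 2) v_i$. The chain rule in $(x,t)$ followed by integration by parts in $x$ on the $\Delta$-contribution produces the identity
\[
\mathcal{B}''(t) = \int_{\mathbb{R}^d}\! \sum_{\alpha \in \{t, x_1, \ldots, x_d\}} \!\!\mathcal{H}_b\bigl(\partial_\alpha u, \partial_\alpha v\bigr) dx + \int_{\mathbb{R}^d}\! \bigl[\, r(x)\, u\, b_\zeta + (r(x) - 2)\, v \cdot b_\eta \,\bigr] dx,
\]
where $\mathcal{H}_b$ denotes the Hessian quadratic form of $b$ evaluated at $(u,v)$. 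Properties (ii) and (iii), together with the matching shift $-2$ and a Cauchy--Schwarz type estimate, assemble these ingredients into $\mathcal{B}''(t) \geq \tfrac{1}{6(p^*-1)} \int |P_t f|_*\, |Q_t g|_*\, dx$. A double integration by parts in $t$ against the weight $t$ (with boundary terms vanishing thanks to the uniform spectral gaps $\sqrt{d}$ and $\sqrt{3d-2}$ of $L'$ and $M$ on $\mathcal{D}$) yields $\int_0^\infty t\, \mathcal{B}''(t)\, dt = \mathcal{B}(0) \leq K\bigl(\|f\|_p^p/p + \|g\|_q^q/q\bigr)$, and the standard homogenization $(f, g) \mapsto (\lambda f, \lambda^{-1} g)$ optimized in $\lambda > 0$ converts the sum into the product $\|f\|_p\, \|g\|_q$ with final constant $6(p^* - 1)$.

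The principal obstacle, and the reason \cite{wrobel} cannot be invoked directly, is the treatment of the $r(x)$-weighted contribution. In Wr\'obel's framework one has $[\delta_i^*, \delta_i] \geq 0$, which makes the analogue of the second integral automatically of the favourable sign and allows mere convexity of $b$ to close the argument. Here $[\delta_i^*, \delta_i] = -2 < 0$ and the naive transfer fails; compensating the missing $-2$ requires the extra property (iii) (the ``more subtle properties'' alluded to in the introduction) and is precisely what the choice $M = L' - 2$ is engineered to make possible.
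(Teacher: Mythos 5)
Your architecture matches the paper's in outline (Nazarov--Treil Bellman function, the shift $M=L'-2$, the identity $\partial_t^2 u+\Delta u = r u$ versus $\partial_t^2 v_i+\Delta v_i=(r-2)v_i$, double integration by parts in $t$, and the final scaling $(f,g)\mapsto(\lambda f,\lambda^{-1}g)$), but the step on which everything hinges is not actually carried out, and the property (iii) you propose is too weak to carry it. Note that the starred norms $\abs{P_tf}_*$, $\abs{Q_tg}_*$ contain the zero-order terms $r(x)\abs{P_tf}^2$ and $r(x)\abs{Q_tg}^2$; the Hessian form in your (ii) only controls the first-order terms $\abs{\partial_\alpha u}\abs{\partial_\alpha v}$, so the $r$-weighted integral cannot simply be discarded as ``of the correct sign'' --- a quantitative portion of it must be retained to produce these zero-order contributions. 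Your (iii), $\zeta b_\zeta+\eta\cdot b_\eta\geqslant 0$, gives sign but no size, so the claimed bound $\mathcal{B}''(t)\gtrsim\int\abs{P_tf}_*\abs{Q_tg}_*\,dx$ does not follow. Once you upgrade (iii) to the quantitative lower bound that is actually needed, namely $\innprod{\nabla B(z)}{z}\geqslant\frac{\gamma}{2}\left(\tau\abs{\zeta}^2+\tau^{-1}\abs{\eta}^2\right)+\dots$, the unfavourable term $-2\sum_{i}\partial_{\eta_i}B(u)\,Q_tg_i$ coming from the negative commutator reappears and must be absorbed by something else. This is exactly where the paper uses the refined property \ref{eq:bellman4}.\ of \thref{bellman}, whose extra summand $+B_\kappa(z)$ on the right-hand side is the decisive ingredient, and then proves the pointwise inequality \eqref{eq:2},
\[
r(x)B(u(x,t))-2\sum_{i=1}^d\partial_{\eta_i}B(u(x,t))\,Q_tg_i(x)\geqslant 0,
\]
by an explicit two-case computation with the concrete $\beta$ and $\gamma=\frac{q(q-1)}{8}$, using $r(x)\geqslant 2d$ and, crucially, $d\geqslant 2$. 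Your argument never invokes $d\geqslant 2$, which is a clear sign that this step --- the actual content of the proof, and the reason Wr\'obel's theorem cannot be quoted --- is missing; a ``Cauchy--Schwarz type estimate'' does not substitute for it.

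A secondary but genuine omission: $B$ is $C^1$ but not $C^2$, so the Hessian computation behind $\mathcal{B}''(t)$ is not legitimate for $B$ itself. The paper works with the mollification $B_\kappa$, for which property \ref{eq:bellman4}.\ holds only up to the error $\kappa E_\kappa(z)$, and then must choose a sequence $\kappa(n)$ making three different error terms (the $E_{\kappa(n)}$ term and the discrepancies $B-B_{\kappa(n)}$, $\partial_{\eta_i}B-\partial_{\eta_i}B_{\kappa(n)}$ weighted by $r$ and by $Q_tg_i$) vanish in the limit over the exhaustion $X_n=[-n,n]^d$; the same $\kappa(n)$ must also be small enough for the upper-bound lemma. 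Your sketch computes with an unmollified $b$ and never addresses this, nor the truncation in $x$ and the weight $te^{-\varepsilon t}$ that the paper uses to justify the integrations by parts before passing to the limit.
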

	
	\subsection{The Bellman function} \label{sec31}
	
	In order to prove \thref{bet}, let us introduce the Bellman function. Take $p \geqslant 2$ and let $q$ be its conjugate exponent. Define $\beta: [0, \infty)^2 \rightarrow [0, \infty)$ by
	\begin{equation} \label{eq:beta}
	\beta(s,t) = s^p + t^q + \gamma
	\begin{cases}
	s^2 t^{2-q} \quad &\text{if } s^p \leqslant t^q \\
	\frac{2}{p} s^p + \left( \frac{2}{q} - 1 \right) t^q \quad &\text{if } s^p \geqslant t^q
	\end{cases}, 
	\quad \gamma = \frac{q(q-1)}{8}.
	\end{equation}
	The Nazarov--Treil Bellman function is then the function
	\[
	B(\zeta,\eta) = \tfrac{1}{2} \beta \left( \abs{\zeta}, \abs{\eta} \right), \quad \zeta \in \R^{m_1}, \eta \in \R^{m_2}.
	\]
	It was introduced by Nazarov and Treil in \cite{nazarov_treil} and then simplified and used by Carbonaro and Dragi\v{c}ević in \cite{carbonaro_dragicevic,carbonaro_dragicevic2} and by Dragi\v{c}ević and Volberg in \cite{dragicevic_volberg3,dragicevic_volberg4,dragicevic_volberg}.
	Note that $B$ is differentiable but not smooth, so we convolve it with a mollifier $\psi_\kappa$ to get $B_\kappa = B * \psi_\kappa$, where
	\[
	\psi_\kappa(x) = \frac{1}{\kappa^{m_1+m_2}} \psi \left( \frac{x}{\kappa} \right) \quad \text{ and } \quad \psi(x) = c_{m_1,m_2} e^{-\frac{1}{1-\abs{x}^2}} \chi_{B(0,1)}(x), \quad x \in \R^{m_1+m_2}
	\]
	and $c_{m_1,m_2}$ is the $L^1$-normalizing constant. The functions $B$ and $\psi_\kappa$ are biradial and so is $B_\kappa$, hence there exists $\beta_\kappa: [0, \infty)^2 \rightarrow [0, \infty)$ such that
	\[
	B_\kappa(\zeta,\eta) = \tfrac{1}{2} \beta_\kappa \left( \abs{\zeta}, \abs{\eta} \right).
	\]
	We invoke some properties of $\beta_\kappa$ and $B_\kappa$ that were proved in \cite{dragicevic_volberg} and \cite[Theorem 4]{carbonaro_dragicevic}.
	
	\begin{theorem} \thlabel{bellman}
		Let $\kappa \in (0,1)$ and $s,t > 0$. Then we have
		\begin{enumerate}
			\item $0 \leqslant \beta_\kappa(s,t) \leqslant \left( 1 + \gamma \right) \left((s+\kappa)^p + (t+\kappa)^q \right) \label{eq:bellman1}$,
			\item $0 \leqslant \partial_s \beta_\kappa(s,t) \leqslant C_p \max \left( (s+\kappa)^{p-1}, t+\kappa \right)$,
			\item [\hspace{2ex}] $0 \leqslant \partial_t \beta_\kappa(s,t) \leqslant C_p (t+\kappa)^{q-1} \label{eq:bellman2}$.
		\end{enumerate}
		The function $B_\kappa$ is smooth and for every $z = (x,y) \in \R^{m_1+m_2}$ there exists $\tau_\kappa > 0$ such that for $\omega = (\omega_1, \omega_2) \in \R^{m_1+m_2}$ we have
		\begin{enumerate}
			\setcounter{enumi}{2}
			\item $\innprod{\Hess(B_\kappa)(z)\omega}{\omega} \geqslant \frac{\gamma}{2} \left(  \tau_\kappa \abs{\omega_1}^2 + \tau_\kappa^{-1} \abs{\omega_2}^2 \right). \label{eq:bellman3}$
		\end{enumerate}
		There is a continuous function $E_\kappa: \R^{m_1+m_2} \rightarrow \R$ such that
		\begin{enumerate}
			\setcounter{enumi}{3}
			\item $\innprod{\nabla B_\kappa (z)}{z} \geqslant \frac{\gamma}{2} \left(  \tau_\kappa \abs{x}^2 + \tau_\kappa^{-1} \abs{y}^2 \right) - \kappa E_\kappa(z) + B_\kappa(z) \label{eq:bellman4}$,
			\item $\abs{E_\kappa(z)} \leqslant C_{m_1,m_2,p} \left( \abs{x}^{p-1} + \abs{y} + \abs{y}^{q-1} + \kappa^{q-1} \right) \label{eq:bellman5}$.
		\end{enumerate}
	\end{theorem}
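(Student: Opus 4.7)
The plan is to verify each of the five clauses first for the unmollified $\beta$ and $B$, away from the interface $\{s^p = t^q\}$ where $B$ is only $C^1$, and then transfer the bounds to $\beta_\kappa, B_\kappa$ via convolution with $\psi_\kappa$. This follows the strategy carried out in \cite{carbonaro_dragicevic} and \cite{dragicevic_volberg}.

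Clauses (1) and (2) reduce to elementary pointwise computations. In each of the two regions $\beta$ is an explicit sum of monomials in $s$ and $t$, and using $q = p/(p-1)$ one checks directly that $\beta(s,t) \leqslant (1+\gamma)(s^p + t^q)$, that $\partial_s \beta(s,t) \lesssim \max(s^{p-1}, t)$, and that $\partial_t \beta(s,t) \lesssim t^{q-1}$. Continuity of $\beta$ and its first partials across the interface is automatic from the piecewise definition. Since $\psi_\kappa$ has support of radius $\kappa$, convolving reproduces the same bounds with $s, t$ replaced by $s + \kappa, t + \kappa$ on the right-hand sides.

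Clause (3) is the heart of the matter. Writing $B(\zeta, \eta) = \tfrac{1}{2}\beta(\abs{\zeta}, \abs{\eta})$, I would compute $\Hess(B)$ region by region. The pure terms $\abs{\zeta}^p$ and $\abs{\eta}^q$ contribute nonnegatively by radial convexity; the remaining cross contributions (namely $\gamma \abs{\zeta}^2 \abs{\eta}^{2-q}$ when $\abs{\zeta}^p \leqslant \abs{\eta}^q$, and the rescaled $\abs{\zeta}^p, \abs{\eta}^q$ in the opposite region) must combine with them to yield the split lower bound $\tfrac{\gamma}{2}(\tau \abs{\omega_1}^2 + \tau^{-1} \abs{\omega_2}^2)$. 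The parameter $\tau$ is a suitable power of $\abs{\zeta}$ or $\abs{\eta}$ chosen to balance the curvature in the two blocks, and its admissibility hinges on the precise choice $\gamma = q(q-1)/8$; for $B_\kappa$ one then produces a pointwise $\tau_\kappa > 0$ by a compactness argument.

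Clauses (4) and (5) follow from the anisotropic homogeneity $\beta(\lambda s, \lambda^{p-1} t) = \lambda^p \beta(s, t)$, which holds because $(p-1)q = p$ and $(p-1)(2-q) + 2 = p$. Differentiating at $\lambda = 1$ gives Euler's identity $s \partial_s \beta + (p-1) t \partial_t \beta = p \beta$, which combined with Clause (3) along the radial direction $\omega = z$ produces the unmollified version of (4). The mollification contributes an additive error of order $\kappa$; the pointwise bound on $\abs{E_\kappa}$ in (5) is inherited from the derivative estimates of (2) at scales within $\kappa$ of $z$. The main obstacle is Clause (3): the split Hessian lower bound with the $\tau_\kappa, \tau_\kappa^{-1}$ structure is what makes the bilinear embedding close, and verifying it requires the delicate choice $\gamma = q(q-1)/8$ together with careful treatment of the transition curve.
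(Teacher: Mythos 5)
You should first note what the paper actually does with this statement: it does not prove it at all, but imports it verbatim from \cite[Theorem 4]{carbonaro_dragicevic} and \cite{dragicevic_volberg}. So the benchmark for your attempt is the proofs in those references, and your outline indeed follows their general strategy (establish the estimates for the unmollified $\beta$ and $B$, then transfer them to $\beta_\kappa$, $B_\kappa$ by convolution). As written, however, it is a plan rather than a proof, and the gap sits exactly where you locate ``the heart of the matter.'' Clause (3) is never established: the explicit computation of $\Hess(B)$ in the two regions, the identification of a concrete $\tau(z)$, and the verification that the choice $\gamma = q(q-1)/8$ lets the cross terms be absorbed are all replaced by the assertion that the contributions ``must combine'' to give the split bound. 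Moreover, $B$ is only $C^1$ across the interface $\{\abs{\zeta}^p=\abs{\eta}^q\}$, so the unmollified Hessian bound holds only off the interface, and the passage to $B_\kappa$ is not a ``compactness argument'': one writes $\Hess(B_\kappa)(z)=\int \Hess(B)(z-w)\psi_\kappa(w)\,dw$, inserts the pointwise bound with the variable $\tau(z-w)$, sets $\tau_\kappa(z)=\int \tau(z-w)\psi_\kappa(w)\,dw$, and uses Jensen's inequality to obtain the $\tau_\kappa^{-1}$ half. Without these steps clause (3) --- and hence the theorem --- is not proved.

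For clauses (4)--(5) the homogeneity $\beta(\lambda s,\lambda^{p-1}t)=\lambda^p\beta(s,t)$ and the resulting Euler identity $s\,\partial_s\beta+(p-1)t\,\partial_t\beta=p\beta$ are correct, but ``combined with Clause (3) along the radial direction'' is not a derivation of (4): the Hessian bound controls $\innprod{\nabla B(z)}{z}-B(z)+B(0)$ only through an integral $\int_0^1 \rho\,\innprod{\Hess(B)(\rho z)z}{z}\,d\rho$ in which $\tau(\rho z)$ varies and the weight decays, so it does not yield the stated inequality with the \emph{same} $\tau_\kappa$ as in (3); in the cited papers the gradient inequality is checked by direct region-by-region computation of $\innprod{\nabla\beta}{(s,t)}-\beta$ against $\gamma(\tau s^2+\tau^{-1}t^2)$. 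You also leave unexplained where $-\kappa E_\kappa(z)$ and the bound (5) come from: one must split $\innprod{\nabla B_\kappa(z)}{z}=\int\innprod{\nabla B(z-w)}{z-w}\psi_\kappa(w)\,dw+\int\innprod{\nabla B(z-w)}{w}\psi_\kappa(w)\,dw$, apply the unmollified inequality to the first term, and bound the second by $\kappa$ times the first-derivative estimates of clause (2); that decomposition is precisely what produces a continuous $E_\kappa$ satisfying (5). Since the entire reason this theorem is quoted in the paper is the extra $B_\kappa(z)$ term in (4) (it is what saves the Bellman argument when $[\delta_i^*,\delta_i]<0$), an argument that does not actually secure that term with the matching $\tau_\kappa$ falls short; either carry out these computations in full or do as the paper does and cite \cite[Theorem 4]{carbonaro_dragicevic} and \cite{dragicevic_volberg}.
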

	
	\subsection{Proof of \texorpdfstring{\thref{bet}}{Theorem 4}}
	
	Having defined the Bellman function, we proceed to the proof. First we should emphasize that the presence of the term $B_\kappa(z)$ in \ref{eq:bellman4}. is the key ingredient for the Bellman method to work despite the fact that $[\delta_i^*, \delta_i] < 0$. Because of that, the proof of \thref{liminf} is more involved than in \cite{wrobel}.
	
	Let
	\begin{equation} \label{eq:u}
	u(x,t) = \left( P_tf(x), Q_tg(x) \right) = \left( P_tf(x), Q_t g_1(x), \dots, Q_t g_d(x) \right)
	\end{equation}
	for $x \in \R^d$ and $t > 0$ and fix $p \geqslant 2$. We will use the Bellman function $B_\kappa$ and $b_\kappa = B_\kappa \circ u$ with $m_1 = 1$ and $m_2 = d$. Our aim is to estimate the integral
	\[
	I(n,\varepsilon) = \int_0^\infty \int_{X_n} \left( \partial_t^2 + \Delta \right) (b_{\kappa(n)})(x,t) \, dx \, t e^{-\varepsilon t} \, dt,
	\]
	where $\kappa(n)$ is a number depending on $n$ and $X_n = [-n, n]^d$ so that $\{X_n\}_{n \in \N}$ is an increasing family of compact sets such that $\R^d = \bigcup_n X_n$.
	
	\begin{lemma} \thlabel{liminf}
		We have
		\[
		\liminf_{\varepsilon \rightarrow 0^+} \liminf_{n \rightarrow \infty} I(n,\varepsilon) \geqslant \gamma \int_0^\infty \int_{\R^d} \abs{P_t f(x)}_* \abs{Q_t g(x)}_* \, dx \, t \, dt.
		\]
		
		%\left( \partial_t^2 - \tilde{L} \right) (b_\kappa) (x,t) \geqslant \gamma \abs{F(x,t)}_* \abs{G(x,t)}_* - \kappa r(x) E_\kappa(u(x,t)),
		
	\end{lemma}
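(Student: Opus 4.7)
The natural approach is to obtain a pointwise lower bound on $(\partial_t^2+\Delta) b_{\kappa(n)}$ and then pass to the limit. By the chain rule applied to $b_\kappa = B_\kappa \circ u$,
\[
(\partial_t^2+\Delta) b_\kappa = \innprod{\Hess B_\kappa(u)\,\partial_t u}{\partial_t u} + \sum_{i=1}^d \innprod{\Hess B_\kappa(u)\,\partial_{x_i} u}{\partial_{x_i} u} + \innprod{\nabla B_\kappa(u)}{(\partial_t^2+\Delta)u}.
\]
The semigroup equations $\partial_t^2 P_t f = L' P_tf$ and $\partial_t^2 Q_t g_i = M Q_t g_i$, combined with \eqref{eq:r}, give $(\partial_t^2+\Delta) P_tf = r\,P_tf$ and $(\partial_t^2+\Delta) Q_tg_i = (r-2)\,Q_tg_i$, so the transport term becomes $r\innprod{\nabla B_\kappa(u)}{u} - 2\innprod{\nabla_\eta B_\kappa(u)}{Q_tg}$. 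The troublesome $-2$ is precisely the commutator $[\delta_i^*,\delta_i]$ which obstructs a direct appeal to the framework of \cite{wrobel}.

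Next I would invoke the Hessian lower bound \ref{eq:bellman3} with $\omega=\partial_t u$ and with $\omega=\partial_{x_i} u$, and the gradient identity \ref{eq:bellman4} applied to $r\innprod{\nabla B_\kappa(u)}{u}$. Summing, the Hessian contributions and the $\tfrac{\gamma r}{2}(\tau_\kappa \abs{P_tf}^2 + \tau_\kappa^{-1}\abs{Q_tg}^2)$ piece combine, via the definition of $\abs{\cdot}_*$, into $\tfrac{\gamma}{2}(\tau_\kappa \abs{P_tf}_*^2 + \tau_\kappa^{-1}\abs{Q_tg}_*^2)$. AM-GM converts this into $\gamma\abs{P_tf}_*\abs{Q_tg}_*$, which is exactly the integrand we are after. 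The remainder is
\[
rB_\kappa(u) - r\kappa E_\kappa(u) - 2\innprod{\nabla_\eta B_\kappa(u)}{Q_tg},
\]
and it must contribute non-negatively in the limit.

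The genuinely new point, compared to \cite{wrobel}, is the absorption of the commutator correction $-2\innprod{\nabla_\eta B_\kappa(u)}{Q_tg}$ by the non-negative term $rB_\kappa(u)=\tfrac r2\beta_\kappa(\abs{P_tf},\abs{Q_tg})$. Writing $\innprod{\nabla_\eta B_\kappa(u)}{Q_tg} = \tfrac12\partial_t\beta_\kappa(\abs{P_tf},\abs{Q_tg})\abs{Q_tg}$ and using the upper bound on $\partial_t\beta_\kappa$ from \ref{eq:bellman2} together with a matching lower bound on $\beta_\kappa$ derived from \eqref{eq:beta}, one verifies the pointwise inequality $\tfrac r2 \beta_\kappa \geq \partial_t\beta_\kappa \cdot \abs{Q_tg}$ once $r\geq 2d\geq 4$, which is why the hypothesis $d\geq 2$ appears in the statement. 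I expect the case analysis between the branches $s^p\leq t^q$ and $s^p\geq t^q$ of \eqref{eq:beta}, carried through the mollification, to be the main technical obstacle.

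Finally, to pass to the limit I would choose $\kappa(n)\to 0$ slowly enough that $\int_0^\infty\int_{X_n} r\kappa E_\kappa(u)\,t e^{-\varepsilon t}\,dx\,dt \to 0$ as $n\to\infty$, using the growth control \ref{eq:bellman5} together with the $L^p$ and $L^q$ contractivity of $P_t$ and $Q_t$; for each fixed $\varepsilon>0$ the damping factor $te^{-\varepsilon t}$ and the compactness of $X_n$ make the error integral finite and then small. Monotone convergence recovers $\gamma\int_0^\infty\int_{\R^d}\abs{P_tf}_*\abs{Q_tg}_*\,dx\,t\,dt$ as $\varepsilon\to 0^+$. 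The limits themselves are routine bookkeeping; the delicate step remains the pointwise absorption just described.
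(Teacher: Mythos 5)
Your overall architecture is the right one and matches the paper's: chain rule for $b_\kappa = B_\kappa\circ u$, the semigroup identities giving the transport term $r\innprod{\nabla B_\kappa(u)}{u} - 2\innprod{\nabla_\eta B_\kappa(u)}{Q_tg}$, items \ref{eq:bellman3}.\ and \ref{eq:bellman4}.\ of \thref{bellman} plus AM--GM to produce $\gamma\abs{P_tf}_*\abs{Q_tg}_*$, and a choice of $\kappa(n)\to 0$ to kill the $\kappa\, r\, E_\kappa(u)$ error before letting $\varepsilon\to 0^+$. But the step you yourself flag as ``the main technical obstacle'' --- the absorption of $2\innprod{\nabla_\eta B_\kappa(u)}{Q_tg}$ by $rB_\kappa(u)$ --- is exactly where your proposed route breaks down, and it is the genuinely new point of this lemma. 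You suggest verifying $\tfrac r2\beta_\kappa \geqslant \partial_t\beta_\kappa\cdot\abs{Q_tg}$ directly for the mollified function, using the upper bound on $\partial_t\beta_\kappa$ from item \ref{eq:bellman2}.\ of \thref{bellman} together with a ``matching lower bound on $\beta_\kappa$''. No such lower bound is available in \thref{bellman}, and more importantly the inequality has no slack in the constants: when one carries out the computation for the unmollified $\beta$ from \eqref{eq:beta}, the branch $\abs{\zeta}^p\geqslant\abs{\eta}^q$ reduces to $(d-q)(q^2-3q-6)\leqslant 0$, which is an equality at $d=2$, $q=2$. Hence any argument that replaces the exact formulas for $\beta$ and $\partial_{\eta_i}B$ by a generic constant $C_p$ (as in item \ref{eq:bellman2}.) cannot close, and ``carrying the case analysis through the mollification'' is not a routine matter either, since mollification destroys the exact algebraic structure the two-case computation relies on.

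The paper circumvents this by proving the absorption inequality \eqref{eq:2} for the \emph{unmollified} $B$, where $\partial_{\eta_i}B$ can be computed explicitly and the two cases of \eqref{eq:beta} reduce to elementary polynomial inequalities valid for $d\geqslant 2$, $1<q\leqslant 2$; it then adds and subtracts the unmollified terms in the pointwise bound (inequality \eqref{eq:5}) and shows that the discrepancies $r\,(B_{\kappa(n)}-B)(u)$ and $\partial_{\eta_i}(B_{\kappa(n)}-B)(u)\,Q_tg_i$ vanish after integration over $X_n$. This requires an extra ingredient you do not supply: since $f,g_i\in\mathcal{D}$ are finite linear combinations of Hermite functions, $u$ is bounded on $X_n\times[0,\infty)$ uniformly in $t$, so $B$ and $\nabla_\eta B$ are uniformly continuous on a neighbourhood of the range of $u$, and one can pick $\kappa(n)$ satisfying \eqref{eq:k1}, \eqref{eq:k2}, \eqref{eq:k3} with bounds uniform in $t\geqslant 0$ (uniformity in $t$ matters, because for fixed $\varepsilon$ the weight $te^{-\varepsilon t}$ still integrates to $\varepsilon^{-2}$). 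Your appeal to $L^p$/$L^q$ contractivity of $P_t$, $Q_t$ for the $E_\kappa$ term is not what is needed here; the paper's argument is the pointwise/uniform one just described. So: right skeleton, but the decisive absorption step is missing and the route you sketch for it would not work as stated.
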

	
	\begin{proof}
		In order to make formulae more compact, we will sometimes write $\partial_{x_0}$ instead of $\partial_t$. The first step will be to prove that
		\begin{equation} \label{eq:1}
		\begin{aligned} 
		\left(\partial_t^2 + \Delta \right) (b_\kappa) (x,t) &\geqslant \gamma \abs{P_t f(x)}_* \abs{Q_t g(x)}_* - \kappa r(x) E_\kappa(u(x,t)) \\
		&+ r(x)B_\kappa(u(x,t)) - 2 \sum_{i=1}^d \partial_{\eta_i} B_\kappa(u(x,t)) Q_t g_i(x).
		\end{aligned}
		\end{equation}
		Recall that $r$ was defined in \eqref{eq:r}. From the chain rule we get $\partial_{x_i} b_\kappa (x,t) = \innprod{\nabla B_\kappa(u(x,t))}{\partial_{x_i} u(x,t)}$ for $i = 0, \dots, d$. Then, again by the chain rule, we have
		\[
		\partial_{x_i}^2 b_\kappa (x,t) = \innprod{\nabla B_\kappa(u(x,t))}{\partial_{x_i}^2 u(x,t)} + \innprod{\Hess(B_\kappa)(u(x,t))(\partial_{x_i} u(x,t))}{\partial_{x_i} u(x,t)}.
		\]
		Summing for $i = 0, \dots, d$, we get
		\begin{align*}
		\left(\partial_t^2 + \Delta \right) (b_\kappa) (x,t) &= \innprod{\nabla B_\kappa(u(x,t))}{(\partial_t^2 + \Delta) (u)(x,t)} \\
		&+ \sum_{i=0}^d \innprod{\Hess(B_\kappa)(u(x,t))(\partial_{x_i} u(x,t))}{\partial_{x_i} u(x,t)}.
		\end{align*}
		
		By the definition of $P_t$ and $Q_t$ we see that
		\[
		(\partial_t^2 - L') P_t f = 0
		\]
		and
		\[
		(\partial_t^2 - L') Q_t g_i = (\partial_t^2 - M) Q_t g_i - 2 Q_t g_i = - 2 Q_t g_i,
		\]
		which can be summarized as
		\[
		\left( \partial_t^2 + \Delta \right)(u) = ru - 2\left(0, Q_t g \right).
		\]
		Therefore, using the fact that $-\Delta = L'-r$ we get
		\begin{align*}
		\left(\partial_t^2 + \Delta \right) (b_\kappa) (x,t) &= r(x) \innprod{\nabla B_\kappa (u(x,t))}{u(x,t)} \\
		&- 2 \sum_{i=1}^d \partial_{\eta_i} B_\kappa(u(x,t)) Q_t g_i(x) \\
		&+ \sum_{i=0}^d \innprod{\Hess(B_\kappa)(u(x,t))\left(\partial_{x_i} u(x,t) \right)}{\partial_{x_i} u(x,t)}.
		\end{align*}
		
		Next, inequalities \ref{eq:bellman3}. and \ref{eq:bellman4}. from \thref{bellman} and the inequality of arithmetic and geometric means imply that
		\begin{align*}
		\left(\partial_t^2 + \Delta \right) (b_\kappa) (x,t) \geqslant & \, r(x)\frac{\gamma}{2} \left( \tau_\kappa \abs{P_t f(x)}^2 + {\tau_\kappa}^{-1} \abs{Q_t g(x)}^2 \right) \\
		&- r(x)\kappa E_\kappa(u(x,t)) + r(x)B_\kappa(u(x,t)) \\
		&- 2 \sum_{i=1}^d \partial_{\eta_i} B_\kappa(u(x,t)) Q_t g_i(x) \\
		&+ \frac{\gamma}{2} \sum_{i=0}^d \left( \tau_\kappa \abs{\partial_{x_i} P_t f(x)}^2 + {\tau_\kappa}^{-1} \abs{\partial_{x_i} Q_t g(x)}^2 \right) \\
		=& \, \frac{\gamma \tau_\kappa \abs{P_t f(x)}_*^2 + \gamma {\tau_\kappa}^{-1} \abs{Q_t g(x)}_*^2 }{2} - r(x)\kappa E_\kappa(u(x,t)) \\
		&+ r(x)B_\kappa(u(x,t)) - 2 \sum_{i=1}^d \partial_{\eta_i} B_\kappa(u(x,t)) Q_t g_i(x) \\
		\geqslant& \, \gamma \abs{P_t f(x)}_* \abs{Q_t g(x)}_* - \kappa r(x) E_\kappa(u(x,t)) \\
		&+ r(x)B_\kappa(u(x,t)) - 2 \sum_{i=1}^d \partial_{\eta_i} B_\kappa(u(x,t)) Q_t g_i(x).
		\end{align*}
		This proves \eqref{eq:1}.
		The next step is to show that
		\begin{equation} \label{eq:2}
		r(x)B(u(x,t)) - 2 \sum_{i=1}^d \partial_{\eta_i} B(u(x,t)) Q_t g_i(x) \geqslant 0.
		\end{equation}
		Note that \eqref{eq:2}, unlike \eqref{eq:1}, features function $B$ and not $B_\kappa$.
		
		We have the following equalities:
		\[
		\pdv{\beta}{y} (x,y) = q y^{q-1} + \gamma
		\begin{cases}
		(2-q) x^2 y^{1-q} \\
		(2-q) y^{q-1}
		\end{cases},
		\]
		
		\[
		\pdv{\abs{\eta}}{\eta_i} = \pdv{\sqrt{\eta_1^2 + \cdots + \eta_d^2}}{\eta_i} =  \frac{\eta_i}{\sqrt{\eta_1^2 + \cdots + \eta_d^2}} = \frac{\eta_i}{\abs{\eta}},
		\]
		
		\begin{align*}
		2\pdv{\eta_i} B(\zeta, \eta) &= \pdv{\eta_i} \beta(\abs{\zeta}, \abs{\eta}) = \pdv{\beta}{y} (\abs{\zeta}, \abs{\eta}) \cdot \pdv{\abs{\eta}}{\eta_i} \\
		&= \left( q \abs{\eta}^{q-1} + \gamma (2-q)
		\begin{cases}
		\abs{\zeta}^2 \abs{\eta}^{1-q} \\
		\abs{\eta}^{q-1}
		\end{cases}
		\right) \frac{\eta_i}{\abs{\eta}}.
		\end{align*}
		Using them, we may rewrite inequality \eqref{eq:2} as
		\begin{equation} \label{eq:34}
		\begin{aligned}
		\left( \abs{x}^2 + 2d \right)
		\left( \abs{\zeta}^p + \abs{\eta}^q + \gamma 
		\begin{cases}
		\abs{\zeta}^2 \abs{\eta}^{2-q} \\
		\frac{2}{p} \abs{\zeta}^p + \left( \frac{2}{q} - 1 \right) \abs{\eta}^q
		\end{cases}
		\right)
		-  \\
		2
		\left( q \abs{\eta}^{q} + \gamma (2-q)
		\begin{cases}
		\abs{\zeta}^2 \abs{\eta}^{2-q} \\
		\abs{\eta}^{q}
		\end{cases}
		\right) \geqslant 0,
		\end{aligned}
		\end{equation}
		where $\zeta = P_tf(x)$ and $\eta = Q_tg(x)$.
		Then, we consider two cases. \\
		\textit{Case 1:} $\abs{\zeta}^p \leqslant \abs{\eta}^q$. Observe that in order to prove \eqref{eq:34} it is sufficient to show that
		\[
		d \abs{\zeta}^p + (d-q) \abs{\eta}^q + \gamma (d-2+q) \abs{\zeta}^2 \abs{\eta}^{2-q} \geqslant 0.
		\]
		Since $q \leqslant 2$, this is true as long as $d \geqslant 2$. \\
		\textit{Case 2:} $\abs{\zeta}^p \geqslant \abs{\eta}^q$. In this case inequality \eqref{eq:34} becomes
		\[
		\left( \abs{x}^2 + 2d \right) \left( 1 + \frac{2\gamma}{p} \right)\abs{\zeta}^p + \left( \left( \abs{x}^2 + 2d \right) \left( 1 + \frac{2 \gamma}{q} - \gamma \right) -2q - 2\gamma (2-q) \right) \abs{\eta}^q \geqslant 0,
		\]
		but as before it is enough to prove that
		\[
		2d \left( 1 + \frac{2\gamma}{q} - \gamma \right) -2q -4\gamma + 2\gamma q \geqslant 0.
		\]
		Plugging the definition of $\gamma$ into this inequality and rearranging it, we arrive at
		\[
		q^3 + q^2 (-d-3) + q (3d - 6) + 6d \geqslant 0.
		\]
		This is equivalent to
		\[
		(d-q)(q^2 - 3q - 6) \leqslant 0,
		\]
		which is true for $1 < q \leqslant 2$ and $d \geqslant 2$.
		
		Having proved \eqref{eq:2}, we come back to \eqref{eq:1} and write
		\begin{equation} \label{eq:5}
		\begin{aligned}
		\left(\partial_t^2 + \Delta \right) (b_\kappa) (x,t) &\geqslant \, \gamma \abs{P_t f(x)}_* \abs{Q_t g(x)}_* - \kappa r(x) E_\kappa(u(x,t)) \\
		&+ r(x)B_\kappa(u(x,t)) - 2 \sum_{i=1}^d \partial_{\eta_i} B_\kappa(u(x,t)) Q_t g_i(x) \\
		&- r(x)B(u(x,t)) + 2 \sum_{i=1}^d \partial_{\eta_i} B(u(x,t)) Q_t g_i(x).
		\end{aligned}
		\end{equation}
		The last step is to show that 
		\[
		\kappa(n) r(x) E_{\kappa(n)} (u(x,t))
		\]
		on one hand, and the difference between 
		\[
		r(x)B(u(x,t)) - 2 \sum_{i=1}^d \partial_{\eta_i} B(u(x,t)) Q_t g_i(x)
		\]
		and
		\[
		r(x)B_{\kappa(n)}(u(x,t)) - 2 \sum_{i=1}^d \partial_{\eta_i} B_{\kappa(n)}(u(x,t)) Q_t g_i(x)
		\]
		on the other, vanish as $n \to \infty$.
		
		First let us prove that $u(x,t)$ is bounded on $X_n \times [0, +\infty)$. Recall that
		\[
		u(x,t) = \left( P_t f(x), Q_t g(x) \right) = \left( P_t f(x), Q_t g_1(x), \dots, Q_t g_d(x) \right),
		\]
		where
		\[
		P_t f = \sum_{n \in \N^d} e^{-t \lambda_n'^{1/2}} \innprod{f}{h_n} h_n, \quad Q_t g_i = \sum_{n \in \N^d} e^{-t (\lambda_n'-2)^{1/2}} \innprod{g_i}{h_n} h_n
		\]
		and $f, g_i \in \mathcal{D}$.
		Since $h_k$ are continuous, they are bounded on $X_n$, thus
		\[
		\abs{P_t f(x)} \leqslant \sum_{k \in \N^d} e^{-t \lambda_k'^{1/2}} \abs{\innprod{f}{h_k}} M_{n,k}
		\]
		for some constants $M_{n,k}$. The above sum has only finitely many non-zero terms and it is a decreasing function of $t$, so $P_t f(x)$ is bounded uniformly for all $x \in X_n$ and $t \geqslant 0$. A similar argument shows that each $Q_t g_i$ is bounded.
		
		Using inequality \ref{eq:bellman5}. from \thref{bellman} and the previous paragraph, we see that there exists a sequence $\kappa(n) \in (0, 1)$, such that for all $t \geqslant 0$ we have
		\begin{equation} \label{eq:k1}
		\int_{X_n} \abs{\kappa(n) r(x) E_{\kappa(n)} (u(x,t))} \, dx \leqslant \frac{1}{n}.
		\end{equation}
		
		Now we turn to estimating $\abs{B(u(x,t)) - B_\kappa (u(x,t))}$. As we have shown, $u[X_n \times [0, +\infty)]$ is bounded in $\R \times \R^d$, which means that $B$ is uniformly continuous on $u[X_n \times [0, +\infty)] + D(0, 1)$, where $D(0, 1)$ denotes the unit disc centered at 0. Therefore, refining $\kappa(n)$ if necessary, we can achieve
		\[
		\abs{B(u(x,t)) - B(u(x,t) - y)} \leqslant \frac{1}{n} \left( \int_{X_n} \abs{r(x)} \, dx \right)^{-1}
		\]
		for all $(x, t) \in X_n \times [0, +\infty)$ and $\abs{y} \leqslant \kappa(n)$. This in turn implies
		\begin{align} \label{eq:k2}
		\abs{B(u(x,t)) - B_{\kappa(n)} (u(x,t))} &\leqslant \int_{D(0, \kappa(n))} \abs{B(u(x,t)) - B(u(x,t) - y)} \psi_{\kappa(n)}(y) \, dy \nonumber \\
		&\leqslant \frac{1}{n} \left( \int_{X_n} \abs{r(x)} \, dx \right)^{-1}.
		\end{align}
		A similar reasoning shows that for each $n \in \N$ there exists $\kappa(n)$ satisfying \eqref{eq:k1} and \eqref{eq:k2} and such that for all $x \in X_n$, $t \geqslant 0$ and $i = 1, \dots, d$
		\begin{equation} \label{eq:k3}
		\abs{\partial_{\eta_i} B(u(x,t)) - \partial_{\eta_i} B_{\kappa(n)} (u(x,t))} \leqslant \frac{1}{n} \left( \int_{X_n} \abs{2 Q_t g_i(x)} \, dx \right)^{-1}.
		\end{equation}
		Coming back to inequality \eqref{eq:5}, we get
		\begin{align*}
		\int_{X_n} & \left( \partial_t^2 + \Delta \right) (b_{\kappa(n)})(x,t) \, dx \\
		&\geqslant \gamma \int_{X_n} \abs{P_t f(x)}_* \abs{Q_t g(x)}_* \, dx - \int_{X_n} \kappa(n) r(x) E_{\kappa(n)}(u(x,t)) \, dx \\
		&+ \int_{X_n} r(x)\left( B_{\kappa(n)}(u(x,t)) - B(u(x,t)) \right) \, dx \\
		&- 2 \int_{X_n} \sum_{i=1}^d Q_t g_i(x) \left( \partial_{\eta_i} B_{\kappa(n)}(u(x,t)) - \partial_{\eta_i} B(u(x,t)) \right) \, dx.
		\end{align*}
		Using conditions \eqref{eq:k1}, \eqref{eq:k2} and \eqref{eq:k3} on $\kappa(n)$ we get
		\[
		\liminf_{n \rightarrow \infty} \int_{X_n}  \left( \partial_t^2 + \Delta \right) (b_{\kappa(n)})(x,t) \, dx \geqslant \gamma \int_{\R^d} \abs{P_t f(x)}_* \abs{Q_t g(x)}_* \, dx
		\]
		and by the monotone convergence theorem
		\[
		\liminf_{\varepsilon \rightarrow 0^+} \liminf_{n \rightarrow \infty} I(n,\varepsilon) \geqslant \gamma \int_0^\infty \int_{\R^d} \abs{P_t f(x)}_* \abs{Q_t g(x)}_* \, dx \, t \, dt.
		\]
		
	\end{proof}
	
	\begin{lemma} \thlabel{limsup}
		For $f, g$ as in \thref{bet} we have
		\[
		\limsup_{\varepsilon \rightarrow 0^+} \limsup_{n \rightarrow \infty} I(n,\varepsilon) \leqslant \frac{1+\gamma}{2} \left( \norm{f}_p^p + \norm{g}_q^q \right).
		\]
	\end{lemma}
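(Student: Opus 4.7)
The plan is to integrate by parts in $t$ and apply the divergence theorem in $x$, reducing $I(n,\varepsilon)$ to its boundary contribution at $t=0$, which property \ref{eq:bellman1}.\ of \thref{bellman} controls by the desired right-hand side. For the $\Delta$ piece, the divergence theorem gives
\[
\int_{X_n} \Delta b_{\kappa(n)}(x,t)\, dx = \int_{\partial X_n} \partial_\nu b_{\kappa(n)}(x,t)\, dS;
\]
since $f, g_i \in \mathcal{D}$ are finite linear combinations of Hermite functions, $u(\cdot,t)$ and its first-order derivatives decay like $e^{-\abs{x}^2/2}$ times a polynomial in $\abs{x}$, uniformly in $t$, and combined with the gradient bound \ref{eq:bellman2}.\ of \thref{bellman} this surface integral decays faster than any polynomial in $n$, so its $t$-integral against $te^{-\varepsilon t}$ vanishes as $n \to \infty$. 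For the $\partial_t^2$ piece, set $w(t) = te^{-\varepsilon t}$; then $w(0)=0$, $w'(0)=1$, and $w''(t) = -\varepsilon(2-\varepsilon t)e^{-\varepsilon t}$. Since $b_{\kappa(n)}$ and $\partial_t b_{\kappa(n)}$ are uniformly bounded in $t$ (each Hermite mode of $u$ carries a damping factor $e^{-t\lambda_k'^{1/2}}$), the weight $te^{-\varepsilon t}$ makes the boundary terms at $t=\infty$ vanish, so integrating by parts twice produces
\[
\int_0^\infty \partial_t^2 b_{\kappa(n)}(x,t)\, te^{-\varepsilon t}\, dt = b_{\kappa(n)}(x,0) - \varepsilon \int_0^\infty b_{\kappa(n)}(x,t)(2-\varepsilon t)e^{-\varepsilon t}\, dt.
\]

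For the $t=0$ term, property \ref{eq:bellman1}.\ of \thref{bellman} yields
\[
\int_{X_n} b_{\kappa(n)}(x,0)\, dx \leq \tfrac{1+\gamma}{2}\int_{X_n} \bigl( (\abs{f(x)}+\kappa(n))^p + (\abs{g(x)}+\kappa(n))^q \bigr)\, dx.
\]
Refining $\kappa(n) \to 0$ quickly enough—compatibly with conditions \eqref{eq:k1}--\eqref{eq:k3}, and such that $\kappa(n)^{\max(p,q)}\abs{X_n} \to 0$—the right-hand side tends to $\tfrac{1+\gamma}{2}(\norm{f}_p^p + \norm{g}_q^q)$ as $n \to \infty$. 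By dominated convergence (with dominating function $\tfrac{1+\gamma}{2}((\abs{P_t f}+1)^p+(\abs{Q_t g}+1)^q)$), the remaining bulk term converges, as $n \to \infty$, to $-\varepsilon \int_0^\infty\!\int_{\R^d} B(u(x,t))(2-\varepsilon t)e^{-\varepsilon t}\, dx\, dt$.

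Finally, as $\varepsilon \to 0^+$: we have $\abs{\varepsilon(2-\varepsilon t)e^{-\varepsilon t}} \leq 2$ uniformly in $t \geq 0$ and $\varepsilon \in (0,1]$, while pointwise in $t$ this factor tends to $0$. The estimate $\int_{\R^d} B(u(x,t))\, dx \leq \tfrac{1+\gamma}{2}(\norm{P_tf}_p^p + \norm{Q_tg}_q^q)$, together with the exponential decay in $t$ of each of these norms (since $f, g_i$ are finite Hermite combinations, yielding $\norm{P_tf}_p \leq C e^{-ct}$), provides the integrable dominating function required, so the bulk term vanishes by dominated convergence. The principal technical obstacle is to coordinate the refinement of $\kappa(n)$ with the prior conditions and to verify the convergence claims (especially the vanishing of the $\partial X_n$ flux after time-integration); each is routine once the setup is correct.
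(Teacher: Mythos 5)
Your plan is essentially the paper's proof: split $I(n,\varepsilon)$ into the $\Delta$-part and the $\partial_t^2$-part, kill the former by integrating by parts in $x$ and using item \ref{eq:bellman2}.\ of \thref{bellman} together with the rapid decay of Hermite functions, integrate by parts twice in $t$ to isolate $\int_{X_n} b_{\kappa(n)}(x,0)\,dx$, and control that boundary term via item \ref{eq:bellman1}.\ after letting $\kappa(n)$ decay fast relative to $\abs{X_n}$. Those parts are fine, up to a small slip: since $\kappa(n)<1$ and $p\geqslant q$, the condition you need is $\kappa(n)^{\min(p,q)}\abs{X_n}\to 0$, i.e.\ $\max\bigl(\kappa(n)^p,\kappa(n)^q\bigr)\abs{X_n}\to 0$ as in \eqref{eq:k4}, not $\kappa(n)^{\max(p,q)}\abs{X_n}\to 0$, because the $g$-term requires the exponent $q$.

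The step that fails as written is the $n\to\infty$ limit of the bulk term. Your dominating function $\tfrac{1+\gamma}{2}\bigl((\abs{P_tf}+1)^p+(\abs{Q_tg}+1)^q\bigr)$ is not integrable over $\R^d\times(0,\infty)$ --- the constant $1$ does not belong to $L^p(\R^d)$ --- so dominated convergence cannot be invoked with it, and the passage from $\int_{X_n} B_{\kappa(n)}(u)\,dx$ to $\int_{\R^d} B(u)\,dx$ is exactly where the tension between the growing cubes $X_n$ and the mollification parameter $\kappa(n)$ lives: the excess coming from $\kappa(n)$ is pointwise small but is integrated over a set of volume $(2n)^d$. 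The paper sidesteps this: since $b_{\kappa(n)}\geqslant 0$, the piece $-2\varepsilon\int_{X_n}\int_0^\infty b_{\kappa(n)}e^{-\varepsilon t}$ has a favorable sign and is discarded outright, and the remaining piece $\varepsilon^2\int_{X_n}\int_0^\infty b_{\kappa(n)}\,te^{-\varepsilon t}$ is bounded using item \ref{eq:bellman1}.\ plus the condition \eqref{eq:k4}, giving $\limsup_{n}\leqslant C\varepsilon^2$, which makes your final $\varepsilon\to 0$ dominated-convergence step (itself correct, using $\norm{P_tf}_p\leqslant Ce^{-ct}$ for $f\in\mathcal{D}$) unnecessary in that form. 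Your route can be repaired --- e.g.\ by constructing a dominating function uniform in $n$ from the fast decay of $\kappa(n)$, or simply by adopting the sign observation --- but some such additional input is needed; as stated the domination is invalid.
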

	
	\begin{proof}
		Denote
		\[
		I_1(n,\varepsilon) = \int_0^\infty \int_{X_n} \partial_t^2 \left(b_{\kappa(n)} \right) (x,t) \, dx \, t e^{-\varepsilon t} \, dt,
		\]
		\[
		I_2(n,\varepsilon) = \int_0^\infty \int_{X_n} \Delta \left(b_{\kappa(n)}\right) (x,t) \, dx \, t e^{-\varepsilon t} \, dt.
		\]
		Then $I(n, \varepsilon) = I_1(n, \varepsilon) + I_2(n, \varepsilon)$. First we prove that $\lim_{n \rightarrow \infty} I_2(n, \varepsilon) = 0$. Since
		\[
		I_2(n, \varepsilon) = \sum_{i=1}^d \int_0^\infty \int_{X_n} \partial_{x_i}^2 \left( b_{\kappa(n)} \right) (x,t) \, dx \, te^{-\varepsilon t} \, dt,
		\]
		it is sufficient to prove that each summand tends to $0$. We will present the proof for the first term only, call it $I_2^1(n, \varepsilon)$. Let $x' = (x_2, \dots, x_d)$. Integrating by parts with respect to $x_1$, we get
		\[
		I_2^1(n,\varepsilon) = \int_0^\infty \int_{[-n,n]^{d-1}} \left( \partial_{x_1} \left( b_{\kappa(n)} \right) (n,x',t) - \partial_{x_1} \left( b_{\kappa(n)} \right) (-n,x',t) \right) \, dx' \, te^{-\varepsilon t} \, dt.
		\]
		By the chain rule
		\begin{align*}
		\partial_{x_1} \left( b_{\kappa(n)} \right) (\pm n,x',t) &= \partial_\zeta B_{\kappa(n)} ( u(\pm n, x', t) ) \partial_{x_1} P_t f(\pm n, x') \\
		&+ \innprod{\nabla_\eta B_{\kappa(n)} ( u(\pm n, x', t) )}{\partial_{x_1} Q_t g(\pm n, x')}.
		\end{align*}
		Recall that $f, g_i \in \mathcal{D}$ and hence $P_tf, Q_t g_i \in \mathcal{D}$. Using item \ref{eq:bellman2}. of \thref{bellman} and the fact that the Hermite functions converge to 0 rapidly we conclude that $\lim_{n \rightarrow \infty} I_2(n, \varepsilon) = 0$.
		
		Now we turn to $I_1$. Using Fubini's theorem, we may interchange the order of integration to get
		\[
		I_1(n,\varepsilon) = \int_{X_n} \int_0^\infty \partial_t^2 \left(b_{\kappa(n)} \right) (x,t) \, t e^{-\varepsilon t} \, dt \, dx.
		\]
		Next, we use integration by parts on the inner integral twice, neglecting the boundary terms (this is allowed by the same argument as in the previous paragraph). This leads to
		\begin{align*}
		I_1(n, \varepsilon) &= -\int_{X_n} \int_0^\infty \partial_t \left(b_{\kappa(n)} \right) (x,t) \, (1-\varepsilon t) e^{-\varepsilon t} \, dt \, dx \\
		&= \int_{X_n} b_{\kappa(n)}(x,0) \, dx + \varepsilon^2 \int_{X_n} \int_0^\infty b_{\kappa(n)} (x,t) \, t e^{-\varepsilon t} \, dt \, dx \\
		&- 2\varepsilon \int_{X_n} \int_0^\infty b_{\kappa(n)} (x,t) \, e^{-\varepsilon t} \, dt \, dx \\
		&\leqslant \int_{X_n} b_{\kappa(n)}(x,0) \, dx + \varepsilon^2 \int_{X_n} \int_0^\infty b_{\kappa(n)} (x,t) \, t e^{-\varepsilon t} \, dt \, dx.
		\end{align*}
		Denote the last two terms by $I_1^1(n)$ and $I_1^2(n, \varepsilon)$.
		
		First we will show that $\limsup_{\varepsilon \rightarrow 0^+} \limsup_{n \rightarrow \infty} I_1^2(n,\varepsilon) = 0$. Before we proceed, we remind the reader that the semigroups $P_t$ and $Q_t$ were defined in \eqref{eq:semigroups}. Item \ref{eq:bellman1}. of \thref{bellman} implies that
		\[
		I_1^2(n,\varepsilon) \leqslant \varepsilon^2 C_p \int_{X_n} \int_0^\infty \left( \abs{P_t f(x)}^p + \abs{Q_t g(x)}^q + \max\left( \kappa(n)^p , \kappa(n)^q \right) \right) \, t e^{-\varepsilon t} \, dt \, dx.
		\]
		Taking $\kappa(n)$ satisfying \eqref{eq:k1}, \eqref{eq:k2} and \eqref{eq:k3} and such that 
		\begin{equation} \label{eq:k4}
		(2n)^d \max\left( \kappa(n)^p , \kappa(n)^q \right) \leqslant \frac{1}{n},
		\end{equation}
		we get
		\[
		\limsup_{n \rightarrow \infty} I_1^2(n,\varepsilon) \leqslant \varepsilon^2 C_p \int_{X} \int_0^\infty \left( \abs{P_t f(x)}^p + \abs{Q_t g(x)}^q\right) \, t \, dt \, dx \leqslant C \varepsilon^2.
		\]
		
		The last step is to estimate $I_1^1(n)$. Using item \ref{eq:bellman1}. of \thref{bellman} again, we obtain
		\[
		I_1^1(n) \leqslant \frac{1+\gamma}{2} \int_{X_n} \left( \abs{f(x)} + \kappa(n) \right)^p \, dx + \frac{1+\gamma}{2} \int_{X_n} \left( \abs{g(x)} + \kappa(n) \right)^q \, dx.
		\]
		We take $\varepsilon > 0$, denote $A = \{x \in \R^d: \varepsilon\abs{f(x)} \geqslant \abs{\kappa(n)}\}$ and split these two integrals as follows:
		\begin{align*}
		I_1^1(n) &\leqslant \frac{1+\gamma}{2} \int_{A} \left( \abs{f(x)} + \kappa(n) \right)^p \, dx + \int_{\complement{A}} \left( \abs{f(x)} + \kappa(n) \right)^p  \, dx \\
		&+ \frac{1+\gamma}{2} \int_{A} \left( \abs{g(x)} + \kappa(n) \right)^q \, dx + \int_{\complement{A}} \left( \abs{g(x)} + \kappa(n) \right)^q \, dx \\
		&\leqslant \frac{1+\gamma}{2} \left( (1+\varepsilon)^p \norm{f}_p^p + (1+\varepsilon)^q \norm{g}_q^q \right) \\
		&+ \frac{1+\gamma}{2}(2n)^d \left( \left(1+\varepsilon^{-1} \right)^p \kappa(n)^p + \left(1+\varepsilon^{-1} \right)^q \kappa(n)^q \right).
		\end{align*}
		Since $\kappa(n)$ satisfies \eqref{eq:k4}, we get
		\[
		\limsup_{\varepsilon \rightarrow 0^+} \limsup_{n \rightarrow \infty} I_1^1(n,\varepsilon) \leqslant \frac{1+\gamma}{2} \left( \norm{f}_p^p + \norm{g}_q^q \right)
		\]
		and hence, as we have shown that other terms are negligible, we obtain
		\[
		\limsup_{\varepsilon \rightarrow 0^+} \limsup_{n \rightarrow \infty} I(n,\varepsilon) \leqslant \frac{1+\gamma}{2} \left( \norm{f}_p^p + \norm{g}_q^q \right).
		\]
	\end{proof}
	
	Now we are ready to prove the bilinear embedding theorem.
	
	\begin{proof}[Proof of \thref{bet}]
		Combining \thref{liminf} and \thref{limsup}, we get
		\[
		\int_0^\infty \int_{\R^d} \abs{P_t f(x)}_* \abs{Q_t g(x)}_* \, dx \, t \, dt \leqslant \frac{1+\gamma}{2\gamma} \left( \norm{f}_p^p + \norm{g}_q^q \right).
		\]
		Multiplying $f$ by $\left( \frac{q\norm{g}_q^q}{p\norm{f}_p^p} \right)^{\frac{1}{p+q}}$ and $g$ by the reciprocal of this number, we obtain
		\[
		\int_0^\infty \int_{\R^d} \abs{P_t f(x)}_* \abs{Q_t g(x)}_* \, dx \, t \, dt \leqslant \frac{1+\gamma}{2\gamma} \left( \left( \frac{q}{p} \right)^{1/q} + \left( \frac{p}{q} \right)^{1/p} \right) \norm{f}_p \norm{g}_q.
		\]
		We need to show that $\frac{1+\gamma}{2\gamma} \left( \left( \frac{q}{p} \right)^{1/q} + \left( \frac{p}{q} \right)^{1/p} \right) \leqslant 6(p^*-1)$. Recall that $p \geqslant 2$, so $p^* = p$ and $1 < q \leqslant 2$, hence
		\begin{align*}
		\frac{1+\gamma}{2\gamma} \left( \left( \frac{q}{p} \right)^{1/q} + \left( \frac{p}{q} \right)^{1/p} \right) &= \frac{8+q(q-1)}{2}(q-1)^{\frac{1}{q}-1}(p-1) \\
		&\leqslant (q+3)(q-1)^{\frac{1}{q}-1}(p-1) \leqslant 6(p-1).
		\end{align*}
		A proof of the last inequality can be found in \cite[p.~761]{wrobel}. This proves the bilinear embedding theorem for $p \geqslant 2$.
		
		If $p \leqslant 2$, we switch $p$ and $q$ in the definition of $\beta$ and $\gamma$ in \eqref{eq:beta}, we switch $P_t f$ and $Q_t g$ in the definition of $u$ in \eqref{eq:u} and we consider function $B_\kappa$ from Section \ref{sec31} with $m_1 = d$ and $m_2 = 1$. Then we repeat the argument.
	\end{proof}
	
	\subsection{Proof of \texorpdfstring{\thref{th1}}{Theorem 2}} \label{sec33}
	
	Having proved the bilinear embedding theorem, we move on to the main result of this section.
	
	\begin{proof}		
		First we consider the case $d \geqslant 2$. By duality, it is sufficient to prove that
		\[
		\abs{\sum_{i=1}^d\innprod{R_i' f}{g_i}} \leqslant 36(p^*-1)\norm{f}_p \norm{\left( \sum_{i=1}^d \abs{g_i}^2 \right)^{1/2}}_q
		\]
		for any $f, g_i \in \mathcal{D}$. Since $\mathcal{D}$ is dense in $L^p$ for $1 \leqslant p < \infty$, this will mean that $\mathbf{R'}$ admits a bounded extension to the whole $L^p$ space with the same norm. By \thref{lem2}, we have
		\begin{align} \label{eq:th2}
		&\abs{\sum_{i=1}^d\innprod{R_i' f}{g_i}} \leqslant 4 \int_0^\infty \sum_{i=1}^d \abs{\innprod{\delta_i^* P_t f}{\partial_t Q_t g_i}} t \, dt \nonumber \\ 
		&\leqslant 4\int_0^\infty \int_{\R^d} \left( \sum_{i=1}^d \abs{\delta_i^* P_t f(x)}^2 \right)^{1/2} \left( \sum_{i=1}^d \abs{\partial_t Q_t g_i(x)}^2 \right)^{1/2} \, dx \, t \, dt \\
		&\leqslant 4\sqrt{2} \int_0^\infty \int_{\R^d} \abs{P_t f(x)}_* \abs{Q_t g(x)}_* \, dx \, t \, dt \leqslant 36(p^*-1)\norm{f}_p \norm{\left( \sum_{i=1}^d \abs{g_i}^2 \right)^{1/2}}_q \nonumber
		\end{align}
		The last inequality follows from \thref{bet}.
		
		Now assume that $d = 1$. We will show that $\mathbf{R}'= R_1' = \delta_1^* L'^{-1/2}$ is the adjoint of $\mathbf{R} = R_1 = \delta_1 L^{-1/2}$. Since $\mathbf{R}$ is the operator considered in \cite[Section 5.4]{wrobel}, we can then use \cite[Theorem 9]{wrobel} to get the desired result. That theorem features the constant 48, but we can refine the proof of \cite[Theorem 1]{wrobel}, which is the main ingredient in the proof of \cite[Theorem 9]{wrobel}, in a manner similar to \eqref{eq:th2} to obtain the constant 36.
		
		To prove the adjointness, we check that $\innprod{h_n}{R_1' h_k} = \innprod{R_1 h_n}{h_k}$. For the left-hand side we use \eqref{eq:l'} and item \ref{eq:fact2}. from \thref{fact}.
		
		\begin{equation} \label{eq:adj1}
		\begin{aligned}
		\innprod{h_n}{R_1' h_k} &= \innprod{h_n}{\delta_1^* (L+2)^{-1/2} h_k} = (\lambda_k+2)^{-1/2} \innprod{h_n}{\delta_1^* h_k} \\
		&= \sqrt{2(k_1+1)} (\lambda_k+2)^{-1/2} \innprod{h_n}{h_{k+e_1}} \\
		&=  \begin{cases}
		\sqrt{\frac{2(k_1+1)}{2\abs{k}+d+2}} \quad &\text{if } n = k + e_1 \\
		0 \quad &\text{otherwise}
		\end{cases}.
		\end{aligned}
		\end{equation}
		
		For the right-hand side we use item \ref{eq:fact1}.
		
		\begin{equation} \label{eq:adj2}
		\begin{aligned}
		\innprod{R_1 h_n}{h_k} &= \innprod{\delta_1 L^{-1/2} h_n}{h_k} = \lambda_n^{-1/2} \innprod{\delta_1 h_n}{h_k} \\
		&= \sqrt{2n_1} \lambda_n^{-1/2} \innprod{h_{n-e_1}}{h_k} \\
		&=  \begin{cases}
		\sqrt{\frac{2n_1}{2\abs{n}+d}} \quad &\text{if } n - e_1 = k \\
		0 \quad &\text{otherwise}
		\end{cases}.
		\end{aligned}
		\end{equation}
		This completes the proof.
	\end{proof}
	
	\section{Riesz transforms of the second kind} \label{sec4}
	
	This section is devoted to estimating the norm of the vector of the Riesz transforms
	\[
	\tilde{R}_i f(x) = \delta_i^* L^{-1/2} f(x).
	\]
	As noted earlier, we will give a result similar to \cite[Corollary 1]{dragicevic_volberg} but with an explicit constant.
	
	We want to estimate
	\[
	\norm{\mathbf{\tilde{R}} f}_p \coloneqq \left( \int_{\R^d} \abs{\mathbf{\tilde{R}}f(x)}^p \, dx \right)^{1/p}.
	\]
	Observe that for $f \in \mathcal{D}$ it holds
	\begin{align*}
	\tilde{R}_i f(x) &= \delta_i^*L^{-1/2} f(x) = \left( -\partial_{x_i} + x_i \right) L^{-1/2}f(x) \\
	&= -\delta_i L^{-1/2}f(x) + 2x_i L^{-1/2}f(x) \\
	&= R_i^1f(x) + R_i^2f(x).
	\end{align*}
	Then $\mathbf{\tilde{R}}f(x) = \mathbf{R^1}f(x) + \mathbf{R^2}f(x)$ (with $\mathbf{\tilde{R}}f(x) = \left(\tilde{R}_1 f(x), \dots, \tilde{R}_d f(x) \right)$ and $\mathbf{R^1}$ and $\mathbf{R^2}$ defined analogously), hence
	\[
	\abs{\mathbf{\tilde{R}}f(x)} \leqslant \abs{\mathbf{R^1}f(x)} + \abs{\mathbf{R^2}f(x)}
	\] 
	and
	\begin{equation} \label{eq:R12}
	\norm{\mathbf{\tilde{R}}f}_p \leqslant \norm{\mathbf{R^1}f}_p + \norm{\mathbf{R^2}f}_p.
	\end{equation}
	\cite[Theorem 9]{wrobel} gives the bound of $48(p^*-1)$ (which, as mentioned in Section \ref{sec33}, can be reduced to $36(p^*-1)$) for the $L^p$ norm of $\mathbf{R^1}$, so we will focus on $\mathbf{R^2}$. Next, note that
	\[
	\abs{\mathbf{R^2}f(x)} = 2 \left( \sum_{i=1}^d \abs{x_iL^{-1/2}f(x)}^2 \right)^{1/2}= 2\abs{x}\abs{L^{-1/2}f(x)},
	\]
	which means that it is sufficient to deal with the operator $\abs{x}L^{-1/2}$, formally defined on $\mathcal{D}$ as
	\[
	Sf(x) = \abs{x} L^{-1/2}f(x).
	\]
	This operator turns out to be bounded on all $L^p$ spaces for $1 \leqslant p < \infty$.
	
	\begin{theorem} \thlabel{th2}
		For $1 \leqslant p < \infty$ we have $\norm{S}_{p \rightarrow p} \leqslant 3$.
	\end{theorem}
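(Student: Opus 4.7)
The plan is to apply Schur's test to the positive integral kernel of $S$. Writing $L^{-1/2} = \pi^{-1/2}\int_0^\infty t^{-1/2}e^{-tL}\,dt$ and using the explicit Mehler kernel
\[
K_t(x,y) = (2\pi\sinh 2t)^{-d/2}\exp\!\Big(-\tfrac{\tanh t}{4}|x+y|^2 - \tfrac{\coth t}{4}|x-y|^2\Big),
\]
the operator $S$ has the nonnegative kernel $K_S(x,y) = \pi^{-1/2}|x|\int_0^\infty t^{-1/2}K_t(x,y)\,dt$. Schur's test then yields $\|S\|_{p\to p}\leq A^{1/p'}B^{1/p}\leq \max(A,B)$, where $A := \sup_x |x|L^{-1/2}1(x)$ and $B := \sup_y L^{-1/2}(|\cdot|)(y)$. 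So it suffices to show $A,B \leq 3$ uniformly in the dimension.

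For $A$, a Gaussian integration in $y$ (completing the square in the Mehler kernel) produces $\int K_t(x,y)\,dy=(\operatorname{sech}(2t))^{d/2}\exp(-\tfrac{1}{2}\tanh(2t)|x|^2)$, hence after $w=2t$
\[
A = \sup_x \frac{|x|}{\sqrt{2\pi}}\int_0^\infty \frac{\operatorname{sech}^{d/2}(w)}{\sqrt{w}}\exp\!\Big(-\tfrac{1}{2}\tanh(w)|x|^2\Big)\,dw.
\]
I would split $w\in(0,1]$ and $w\geq 1$. On the first interval, use $\tanh w\geq w/(1+w)\geq w/2$ and $\operatorname{sech}^{d/2}(w)\leq 1$ to reduce to the standard Gaussian moment $\int_0^\infty w^{-1/2}e^{-cw|x|^2}\,dw=\sqrt{\pi/(c|x|^2)}$; on the second, use $\operatorname{sech}^{d/2}(w)\leq 2^{d/2}e^{-dw/2}$ and $\tanh w\geq \tanh 1$, giving a term $|x|e^{-c|x|^2}$ times a constant that actually decays in $d$. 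Optimizing in $|x|$ yields $A$ less than a small absolute constant, comfortably below $3$.

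The subtler estimate is $B$. Using the symmetry $K_t(x,y)=K_t(y,x)$, the integral $\int K_t(y,x)|x|\,dx$ equals $E[|X|]$ against the subprobability measure $K_t(y,\cdot)$, whose normalized law is $X\sim N(\operatorname{sech}(2t)\,y,\,\tanh(2t)\,I_d)$ and whose total mass is $(\operatorname{sech}(2t))^{d/2}\exp(-\tfrac{1}{2}\tanh(2t)|y|^2)$. The triangle inequality gives $E[|X|]\leq \operatorname{sech}(2t)|y|+\sqrt{\tanh(2t)}\,E[|Z|]$ with $Z\sim N(0,I_d)$ and $E[|Z|]=\sqrt{2}\,\Gamma(\tfrac{d+1}{2})/\Gamma(\tfrac{d}{2})\leq\sqrt{d}$. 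Integrating in $t$, $B$ splits as $B_1+B_2$: $B_1$ has an extra $\operatorname{sech}(2t)$ in the integrand and is dominated by $A$; $B_2$ carries a $\sqrt{d}$ prefactor. Monotonicity in $|y|$ localises $\sup B_2$ at $y=0$, where the explicit integral $\int_0^\infty \operatorname{sech}^{d/2}(w)\,dw=\frac{\sqrt{\pi}\,\Gamma(d/4)}{2\Gamma(d/4+1/2)}$ together with Gautschi's inequality $\Gamma(d/4)/\Gamma(d/4+1/2)\leq 2\sqrt{1+4/d}/\sqrt{d}$ absorbs the $\sqrt{d}$ and yields a dimension-free bound for $B_2$.

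The main obstacle is the bookkeeping of constants: arranging the splittings so that the contributions from $A$, $B_1$ and $B_2$ all fit under $3$. The tight case is small $d$ (especially $d=1$), where the $(\operatorname{sech})^{d/2}$ decay is weakest and the Gamma ratio is largest; there one must use the sharp versions of the inequalities above, but the method produces the claimed constant. Once $A,B\leq 3$ is secured, Schur's test immediately gives $\|S\|_{p\to p}\leq 3$ for every $1\leq p<\infty$.
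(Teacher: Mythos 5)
Your strategy is essentially the paper's: you derive the same subordinated Mehler kernel, run Schur's test (the paper phrases it as Riesz--Thorin between $L^1$ and $L^\infty$), bound the row sums $\sup_x\int K(x,y)\,dy$ just as in \thref{prop2} (same Gaussian mass $(\cosh 2t)^{-d/2}e^{-\tanh(2t)|x|^2/2}$, same splitting of the $t$-integral into a small-$t$ Gaussian-moment piece and a large-$t$ exponentially decaying piece), and treat the column sums by a triangle-inequality split into a row-sum-like piece $B_1$ plus a first-moment piece $B_2$. Your $B_2$ --- the central Gaussian moment $\sqrt{\tanh 2t}\,E|Z|$ integrated against $t^{-1/2}(\operatorname{sech}2t)^{d/2}$ --- is exactly the content of \thref{prop1}; the paper merely centres at $y$ via $|z|\le |y|+|z-y|$ instead of at the Gaussian mean $\operatorname{sech}(2t)\,y$, which only changes the harmless factor $\operatorname{sech}(2t)\le 1$ you exploit in $B_1\le A$.

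The one genuine problem is the constant chase, precisely at the point you flag. Replacing $E|Z|=\sqrt2\,\Gamma(\tfrac{d+1}{2})/\Gamma(\tfrac d2)$ by $\sqrt d$ and then invoking Gautschi for $\Gamma(d/4)/\Gamma(d/4+1/2)$ is too lossy in low dimension: at $d=1$ Gautschi gives $B_2\le\sqrt{5/2}\approx 1.58$, while your bound for $A$ (hence for $B_1$) is about $\sqrt2+0.48\approx 1.89$, so $B\le B_1+B_2$ lands near $3.47>3$ and the argument as written does not close. It does close if you keep the exact Gamma values (for $d=1$, $\Gamma(1/4)/(2\sqrt2\,\Gamma(3/4))\approx 1.05$, giving $B\lesssim 2.94$), but then it is uncomfortably tight and requires case-by-case numerics in small $d$. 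The paper avoids this tight case altogether: retaining both Gamma factors, the product $\frac{\Gamma(\frac{d+1}{2})}{\Gamma(\frac d2)}\cdot 2^{d-1}\frac{\Gamma(\frac d2)^2}{\sqrt\pi\,\Gamma(d)}$ collapses to exactly $1$ by the Legendre duplication formula, so the moment term is at most $1$ in every dimension, and the column sum is at most $\bigl(\tfrac1{\sqrt\pi}+\sqrt2\bigr)+1<3$ with no delicate low-dimensional analysis. Replace the $\sqrt d$/Gautschi step by this exact cancellation and your proof coincides with the paper's.
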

	
	In order to prove this theorem, we first derive an expression for the kernel of $S$, i.e., a function $K(x,y)$ such that
	\[
	Sf(x) = \int_{\R^d} K(x,y)f(y) \, dy \quad \text{for } f \in \mathcal{D}.
	\]
	
	\begin{lemma} \thlabel{lem8}
		For $x,y \in \R^d$ we have
		\[
		K(x,y) = \abs{x} \int_0^\infty \frac{1}{\sqrt{t}} K_t(x,y) \, dt,
		\]
		where
		\begin{equation} \label{eq:kt}
		K_t(x,y) = \frac{C_d}{(\sinh 2t)^{d/2}} \exp \left( -\frac{\abs{x-y}^2}{4\tanh t} - \frac{\tanh t}{4} \abs{x+y}^2 \right), \quad C_d = \frac{1}{(2 \pi)^{d/2}\sqrt{\pi}}.
		\end{equation}
	\end{lemma}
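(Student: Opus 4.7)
My plan is to combine the classical subordination formula $\lambda^{-1/2} = \pi^{-1/2} \int_0^\infty t^{-1/2} e^{-t\lambda}\, dt$, applied spectrally to $L$, with Mehler's formula for the Hermite heat semigroup. For $f \in \mathcal{D}$ the spectral sum is finite, so term-by-term application gives
\[
L^{-1/2} f = \frac{1}{\sqrt{\pi}} \int_0^\infty t^{-1/2} e^{-tL} f\, dt
\]
with no convergence issue.

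Next, the $d$-dimensional Mehler formula yields the semigroup kernel
\[
e^{-tL}(x,y) = \frac{1}{(2\pi \sinh 2t)^{d/2}} \exp\!\Bigl(-\tfrac{1}{2}(\abs{x}^2 + \abs{y}^2)\coth 2t + (\sinh 2t)^{-1} x\cdot y\Bigr).
\]
Using the identities $\cosh 2t = \cosh^2 t + \sinh^2 t$ and $\sinh 2t = 2\sinh t \cosh t$, a short algebraic rearrangement converts the exponent into $-\abs{x-y}^2/(4\tanh t) - (\tanh t/4)\abs{x+y}^2$, so that $e^{-tL}(x,y) = \sqrt{\pi}\, K_t(x,y)$ with the constant $C_d$ from the statement absorbing the extra factor of $\pi^{-1/2}$.

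Finally, I would justify Fubini to interchange the $t$- and $y$-integrations. For fixed $x$ and Schwartz $f \in \mathcal{D}$, bounding the $\abs{x+y}^2$ exponential by $1$ and completing the Gaussian square in $y$ shows $\int_{\R^d} K_t(x,y)\,dy \leq C$ uniformly on $t \in (0,1]$, while for $t \geq 1$ the prefactor $(\sinh 2t)^{-d/2}$ supplies exponential decay in $t$. Hence $\int_0^\infty\int_{\R^d} t^{-1/2} K_t(x,y)\abs{f(y)}\,dy\,dt < \infty$, Fubini applies, and multiplying the resulting kernel of $L^{-1/2}$ by $\abs{x}$ gives the claimed formula for $S$. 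The computation is essentially routine once Mehler's formula is in hand; the only mildly delicate point is the small-$t$ behavior of $t^{-1/2}(\sinh 2t)^{-d/2}$, which is non-integrable on the diagonal $y=x$ but is absorbed by the Gaussian factor $\exp(-\abs{x-y}^2/(4\tanh t))$ once integrated against $f$.
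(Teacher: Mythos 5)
Your proposal is correct and follows essentially the same route as the paper: the subordination formula $\lambda^{-1/2}=\pi^{-1/2}\int_0^\infty t^{-1/2}e^{-t\lambda}\,dt$ applied on $\mathcal{D}$, Mehler's formula for the kernel of $e^{-tL}$ (the paper cites it as \cite[4.1.2]{thangavelu}), the same hyperbolic rewriting of the exponent, and an interchange of the $t$- and $y$-integrals. Your explicit verification of absolute convergence for Fubini is slightly more detailed than the paper, which simply asserts it, but the argument is the same.
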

	
	\begin{proof}
		Identity \cite[4.1.2]{thangavelu} states that
		\[
		e^{-tL}f(x) = \frac{1}{(2\pi)^{d/2}} \int_{\R^d} K_t'(x,y)f(y) \, dy,
		\]
		with
		\begin{align*}
		K_t'(x,y) &= \frac{1}{(\sinh 2t)^{d/2}} \exp \left( -\frac{\abs{x}^2+\abs{y}^2}{2}\coth 2t + \frac{\innprod{x}{y}}{\sinh 2t} \right) \\
		&= \frac{1}{(\sinh 2t)^{d/2}} \exp \left( -\frac{\abs{x-y}^2}{4\tanh t} - \frac{\tanh t}{4} \abs{x+y}^2 \right).
		\end{align*}
		Note also that
		\[
		\lambda^{-1/2} = \frac{1}{\sqrt{\pi}} \int_0^\infty e^{-t\lambda} \frac{1}{\sqrt{t}} \, dt.
		\]
		Since $\mathcal{D} = \lin \{ h_n: n \in \N^d \}$, it is sufficient to prove the formula for $f = h_n$. We have
		\begin{align*}
		L^{-1/2}h_n(x) &= \lambda_n^{-1/2} h_n(x) = \frac{1}{\sqrt{\pi}} \int_0^\infty e^{-t\lambda_n} h_n(x) \frac{1}{\sqrt{t}} \, dt \\
		&= \frac{1}{\sqrt{\pi}} \int_0^\infty e^{-tL} h_n(x) \frac{1}{\sqrt{t}} \, dt \\
		&= \frac{1}{\sqrt{\pi}}\frac{1}{(2\pi)^{d/2}} \int_0^\infty \frac{1}{\sqrt{t}} \int_{\R^d} K_t'(x,y)h_n(y) \, dy \, dt.
		\end{align*}
		This integral is absolutely convergent, so we may interchange the order of integration and the conclusion follows.
	\end{proof}
	
	Next we prove that the operator $T$ defined on $L^p$, $1 \leqslant p \leqslant \infty$, as 
	\[
	Tf(x) = \int_{\R^d} K(x,y)f(y) \, dy
	\]
	is bounded uniformly in $d$ and $p$. This will mean that $S$ is bounded on $\mathcal{D}$ in $L^p$ norm and, by density, that it has a unique bounded extension to $L^p$ for $1 \leqslant p < \infty$ with the same norm. We want to use interpolation and our goal is to prove that
	\begin{equation} \label{eq1}
	\int_{\R^d} K(x,z) \, dz \leqslant 2 \quad \text{and} \quad \int_{\R^d} K(z,y) \, dz \leqslant 3
	\end{equation}
	for all $x,y \in \R^d$. Clearly, we have
	
	\begin{equation} \label{eqK}
	\begin{aligned}
	\int_{\R^d} K(z,y) \, dz &= \int_{\R^d} \abs{z} \int_0^\infty \frac{1}{\sqrt{t}} K_t(z,y) \, dt \, dz \\
	&\leqslant \int_{\R^d} \abs{y} \int_0^\infty \frac{1}{\sqrt{t}} K_t(z,y) \, dt \, dz \\
	&+ \int_{\R^d} \abs{y-z} \int_0^\infty \frac{1}{\sqrt{t}} K_t(z,y) \, dt \, dz \\
	&= \int_{\R^d} K(y, z) \, dz +  \int_{\R^d} \abs{y-z} \int_0^\infty \frac{1}{\sqrt{t}} K_t(z,y) \, dt \, dz,
	\end{aligned}
	\end{equation}
	so, by symmetry of $K_t$, it is sufficient to prove the first inequality of \eqref{eq1} and the following proposition.
	
	\begin{proposition} \thlabel{prop1}
		For $y \in \R^d$ it holds
		\begin{equation} \label{eq2}
		\int_{\R^d} \abs{y-z} \int_0^\infty \frac{1}{\sqrt{t}} K_t(z,y) \, dt \, dz \leqslant 1.
		\end{equation}	
	\end{proposition}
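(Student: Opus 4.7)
The plan is to exploit the Gaussian structure of the Mehler kernel $K_t(z,y)$. By Fubini, it suffices to show $\int_0^\infty t^{-1/2} J(t,y)\,dt \leq 1$ with $J(t,y) := \int_{\mathbb{R}^d}|y-z|\,K_t(z,y)\,dz$. Completing the square in the exponent of \eqref{eq:kt} displays $K_t(z,y) = Z_t(y)\,g_{t,y}(z)$, where $g_{t,y}$ is the density of an isotropic Gaussian in $z$ centered at $y/\cosh 2t$ with variance $\tanh 2t$ per coordinate, and $Z_t(y) = \pi^{-1/2}(\cosh 2t)^{-d/2}\exp(-\tfrac{1}{2}\tanh 2t\,|y|^2)$ is its total mass; this is consistent with $\int K_t(z,y)\,dz = \pi^{-1/2} e^{-tL}\mathbf{1}(y)$, which can be verified directly from Mehler's identity.

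The Cauchy--Schwarz inequality then gives
\[
J(t,y) \leq Z_t(y)\sqrt{d\tanh 2t + \tanh^2 t\,\tanh^2 2t\,|y|^2},
\]
since the second moment of $|y-z|^2$ under $g_{t,y}$ equals the trace of the covariance ($d\tanh 2t$) plus the squared distance from $y$ to the mean ($(1-1/\cosh 2t)^2|y|^2 = \tanh^2 t\,\tanh^2 2t\,|y|^2$, using $1-1/\cosh 2t = \tanh t\tanh 2t$). Substituting $s = \tanh 2t$ (so that $dt = ds/(2(1-s^2))$, $(\cosh 2t)^{-d/2} = (1-s^2)^{d/4}$, $\sqrt t = \sqrt{\operatorname{arctanh}(s)/2}$, and $\tanh t\tanh 2t = 1-\sqrt{1-s^2}$) converts the $t$-integral to one over $(0,1)$. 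The $|y|$-dependence can be removed either by a split via $\sqrt{a+b}\leq\sqrt a+\sqrt b$ combined with the elementary inequality $\xi e^{-\xi^2/2}\leq 1/\sqrt e$ (applied with $\xi = |y|\sqrt{\tanh 2t}$ to absorb the drift term), or, more cleanly, by keeping the $d$- and $|y|$-contributions coupled and using the pointwise inequality $e^{-u}\sqrt{d+2u}\leq \sqrt d$ (valid for $d\geq 1$) with $u=\tanh 2t\,|y|^2/2$, which yields directly the $|y|$-free estimate
\[
\int_0^\infty \frac{J(t,y)}{\sqrt t}\,dt \leq \frac{\sqrt d}{2\sqrt 2}\cdot\frac{\Gamma(d/4)}{\Gamma(d/4+\tfrac{1}{2})}.
\]

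The main obstacle is to show this right-hand side is actually $\leq 1$ uniformly in $d\geq 1$. Asymptotically $\Gamma(d/4)/\Gamma(d/4+\tfrac12)\sim 2/\sqrt d$ by Stirling, so the bound tends to $1/\sqrt 2$ as $d\to\infty$; a direct evaluation suffices for $d\geq 2$, where the estimate decreases from $\sqrt\pi/2\approx 0.89$ at $d=2$ down to $1/\sqrt 2$. For $d=1$, however, the ratio $\Gamma(1/4)/\Gamma(3/4)$ makes the preliminary estimate just exceed $1$, and one must sharpen the $s$-integral before passing to the Beta function: invoking the refined Taylor bound $\operatorname{arctanh}(s) \geq s(1 + s^2/3)$ replaces the crude $\sqrt s/\sqrt{\operatorname{arctanh}(s)}\leq 1$ by $\leq 1/\sqrt{1+s^2/3}$ and pushes the resulting constant below $1$. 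Verifying this final quantitative step, and piecing together the sharp estimates so the constant never exceeds $1$, is the most delicate part of the argument.
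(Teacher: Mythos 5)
Your Gaussian factorization of $K_t$ (mean $y/\cosh 2t$, variance $\tanh 2t$ per coordinate, total mass $\pi^{-1/2}(\cosh 2t)^{-d/2}e^{-\frac12\tanh 2t\,\abs{y}^2}$), the second-moment identity and the inequality $e^{-u}\sqrt{d+2u}\leqslant\sqrt{d}$ are all correct, and for $d\geqslant 2$ your constant $\tfrac{\sqrt d}{2\sqrt 2}\,\Gamma(d/4)/\Gamma(d/4+\tfrac12)$ is indeed $\leqslant 1$ (this still needs a one-line justification, e.g. $\Gamma(x+\tfrac12)/\Gamma(x)\geqslant\sqrt{x-\tfrac14}$ with $x=d/4$, which works exactly when $d\geqslant 2$). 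The genuine gap is $d=1$, which the proposition does cover: there your constant is $\tfrac{1}{2\sqrt2}\,\Gamma(\tfrac14)/\Gamma(\tfrac34)\approx 1.05>1$, and the repair you name --- replacing $\sqrt{s/\operatorname{arctanh}(s)}\leqslant 1$ by $(1+s^2/3)^{-1/2}$ --- is precisely the step you leave unverified. It is not automatic: that refinement gains at most the factor $\sqrt3/2\approx 0.87$ (attained only at $s=1$), so one must genuinely estimate $\int_0^1(1-s^2)^{-3/4}(1+s^2/3)^{-1/2}\,ds$ against $\sqrt{2\pi}$; numerically this does succeed (the integral is about $2.4$ while $\sqrt{2\pi}\approx 2.51$), but the margin is small and the computation is absent, so the case $d=1$, and hence the proposition as stated, is not proved. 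The loss that forces this case analysis is Cauchy--Schwarz: replacing the first absolute moment by the square root of the second moment costs the factor $\sqrt{d}\,\Gamma(\tfrac d2)\big/\bigl(\sqrt2\,\Gamma(\tfrac{d+1}{2})\bigr)$, which is largest exactly at $d=1$.

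For comparison, the paper's proof avoids all of this by not completing the square: it simply bounds $\exp\bigl(-\tfrac{\tanh t}{4}\abs{x+y}^2\bigr)\leqslant 1$, which leaves a centered Gaussian in $x-y$ whose first absolute moment is computed exactly, giving $\int_{\R^d}\abs{x-y}K_t(x,y)\,dx\leqslant\tfrac{2}{\sqrt\pi}\,\tfrac{\Gamma((d+1)/2)}{\Gamma(d/2)}\,\tfrac{\sqrt{\tanh t}}{(\cosh t)^d}$; then $\sqrt{\tanh t}\leqslant\sqrt t$, the Beta integral $\int_0^\infty(\cosh t)^{-d}\,dt=2^{d-2}\Gamma(\tfrac d2)^2/\Gamma(d)$ and the Legendre duplication formula yield exactly $1$ for every $d\geqslant 1$, with no case distinction and no numerics. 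To complete your argument you should either carry out the $d=1$ refinement quantitatively or switch to this exact first-moment computation.
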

	
	\begin{proof}
		
		We begin with an auxiliary computation:
		\begin{equation} \label{eq:comp}
		I(k) \coloneqq \int_{\R^d} \abs{x} e^{-k \abs{x}^2} \, dx = \frac{\Gamma \left( \frac{d+1}{2} \right)}{\Gamma \left( \frac{d}{2} \right)} \frac{\pi^{d/2}}{k^{(d+1)/2}} \quad \text{for } k > 0.
		\end{equation}
		
		To prove \eqref{eq:comp}, let $S_d = \frac{2 \pi^{d/2}}{\Gamma \left( \frac{d}{2} \right)}$ denote the surface area of the unit sphere in the $d$-dimensional Euclidean space. Then we can write
		\begin{align*}
		\int_{\R^d} \abs{x} e^{-k\abs{x}^2} \, dx &= \int_0^\infty r e^{-kr^2} r^{d-1} S_d \, dr = \frac{S_d}{2k^{(d+1)/2}} \int_0^\infty t^{(d-1)/2} e^{-t} \, dt \\
		&= \frac{\Gamma \left( \frac{d+1}{2} \right)}{\Gamma \left( \frac{d}{2} \right)} \frac{\pi^{d/2}}{k^{(d+1)/2}}.
		\end{align*}

		Coming back to \eqref{eq2}, in view of \eqref{eq:comp} we have, for $t \geqslant 0$,
		\begin{align*}
		\int_{\R^d} \abs{x-y} K_t(x,y) \, dx &= \frac{C_d}{(\sinh 2t)^{d/2}} \int_{\R^d} \abs{x-y} \exp \left( -\frac{\abs{x-y}^2}{4\tanh t} - \frac{\tanh t}{4} \abs{x+y}^2 \right) \, dx \\
		&\leqslant \frac{C_d}{(\sinh 2t)^{d/2}} \int_{\R^d} \abs{x-y} \exp \left( -\frac{\abs{x-y}^2}{4\tanh t} \right) \, dx \\
		&= \frac{C_d}{(\sinh 2t)^{d/2}} \int_{\R^d} \abs{x} \exp \left( -\frac{\abs{x}^2}{4\tanh t} \right) \, dx \\
		&= \frac{C_d}{(\sinh 2t)^{d/2}} I\left( \frac{1}{4 \tanh t} \right) \\
		&= \frac{\pi^{d/2}}{(2\pi)^{d/2}\sqrt{\pi}} \frac{\Gamma \left( \frac{d+1}{2} \right)}{\Gamma \left( \frac{d}{2} \right)} \frac{\left( 4 \tanh t \right)^{(d+1)/2}}{(\sinh 2t)^{d/2}} \\
		&= \frac{2}{\sqrt{\pi}} \frac{\Gamma \left( \frac{d+1}{2} \right)}{\Gamma \left( \frac{d}{2} \right)} \frac{\sqrt{\tanh t}}{(\cosh t)^d}
		\end{align*}
		
		Plugging it into \eqref{eq2}, we get
		
		\begin{align*}
		\int_{\R^d} \abs{y-z} \int_0^\infty \frac{1}{\sqrt{t}} K_t(z,y) \, dt \, dz &\leqslant \frac{2}{\sqrt{\pi}} \frac{ \Gamma \left( \frac{d+1}{2} \right)}{\Gamma \left( \frac{d}{2} \right)} \int_0^\infty \frac{\sqrt{\tanh t}}{(\cosh t)^d} \frac{dt}{\sqrt{t}} \\
		&\leqslant \frac{2}{\sqrt{\pi}} \frac{ \Gamma \left( \frac{d+1}{2} \right)}{\Gamma \left( \frac{d}{2} \right)} \int_0^\infty \frac{1}{(\cosh t)^d} dt
		\end{align*}
		
		To estimate the last integral, we will use formula \cite[5.12.7]{nist}:
		\[
		\int_0^\infty \frac{1}{\left( \cosh t \right)^{2a}} dt = 4^{a-1} \text{B}(a,a),
		\]
		where B denotes the beta function. We obtain
		\[
		\int_0^\infty \frac{1}{\left( \cosh t \right)^d} \, dt = 4^{\frac{d}{2}-1} \text{B}\left(\frac{d}{2}, \frac{d}{2}\right) = 2^{d-2} \frac{\Gamma \left( \frac{d}{2} \right)^2}{\Gamma(d)}.
		\]
		Finally, using the Legendre duplication formula ($\Gamma(z) \Gamma(z+\frac{1}{2}) = 2^{1-2z} \sqrt{\pi} \Gamma(2z)$), we get
		\begin{align*}
		&\int_{\R^d} \abs{y-z} \int_0^\infty \frac{1}{\sqrt{t}} K_t(z,y) \, dt \, dz \\ 
		&\leqslant \frac{2^{d-1}}{\sqrt{\pi}} \frac{\Gamma \left( \frac{d+1}{2} \right)}{\Gamma \left( \frac{d}{2} \right)} \frac{\Gamma \left( \frac{d}{2} \right)^2}{\Gamma(d)} 
		= 2^{d-1} \frac{\Gamma \left( \frac{d+1}{2} \right) \Gamma \left( \frac{d}{2} \right)}{\sqrt{\pi} \Gamma(d)} = 1.
		\end{align*}
	\end{proof}
	
	Now it remains to justify the first inequality of \eqref{eq1}.
	
	\begin{proposition} \thlabel{prop2}
		For $x \in \R^d$ we have
		\[
		\int_{\R^d} \abs{x} \int_0^\infty \frac{1}{\sqrt{t}} K_t(x,y) \, dt \, dy \leqslant \frac{1}{\sqrt{\pi}} + \sqrt{2}.
		\]
	\end{proposition}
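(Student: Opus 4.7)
The plan is to apply Fubini, evaluate $\int_{\R^d}K_t(x,y)\,dy$ as a Gaussian integral, and then split the remaining one-dimensional integral at $t=\tfrac12$ so the two halves contribute $\sqrt 2$ and (at most) $1/\sqrt\pi$. For the inner Gaussian, I write the exponent of $K_t$ as $-(a+b)(\abs{x}^2+\abs{y}^2)+2(a-b)\innprod{x}{y}$ with $a=(4\tanh t)^{-1}$ and $b=(\tanh t)/4$, complete the square in $y$, and apply $\int_{\R^d}e^{-\alpha\abs{y}^2}dy=(\pi/\alpha)^{d/2}$. The identities $4ab=\tfrac14$, $\tanh t/(1+\tanh^2 t)=\tfrac12\tanh 2t$, and $\tanh 2t/\sinh 2t=(\cosh 2t)^{-1}$ then collapse everything to
\[
\int_{\R^d}K_t(x,y)\,dy=\frac{1}{\sqrt\pi}(\cosh 2t)^{-d/2}\exp\!\left(-\tfrac12\abs{x}^2\tanh 2t\right).
\]
Writing $r=\abs{x}$, it remains to bound $J(r):=\tfrac{r}{\sqrt\pi}\int_0^\infty t^{-1/2}(\cosh 2t)^{-d/2}e^{-r^2\tanh(2t)/2}\,dt$ by $\tfrac{1}{\sqrt\pi}+\sqrt 2$.

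On $[0,\tfrac12]$, concavity of $\tanh$ with $\tanh 0=0$ makes $u\mapsto\tanh(u)/u$ decreasing, so $\tanh(2t)/(2t)\geq\tanh 1>\tfrac12$ on $(0,\tfrac12]$, whence $\tanh 2t\geq t$. Combined with the trivial $(\cosh 2t)^{-d/2}\leq 1$, the elementary Gamma identity $\int_0^\infty t^{-1/2}e^{-\alpha t}dt=\sqrt{\pi/\alpha}$ gives
\[
\frac{r}{\sqrt\pi}\int_0^{1/2}\frac{e^{-r^2\tanh(2t)/2}}{\sqrt t}\,dt\leq\frac{r}{\sqrt\pi}\int_0^\infty\frac{e^{-r^2 t/2}}{\sqrt t}\,dt=\sqrt 2.
\]

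On $[\tfrac12,\infty)$, I apply three bounds simultaneously: $\tanh 2t\geq\tanh 1$ (monotonicity), $t^{-1/2}\leq\sqrt 2$, and $(\cosh 2t)^{-d/2}\leq(\cosh 2t)^{-1/2}\leq\sqrt 2\,e^{-t}$, the last valid for $d\geq 1$ since $\cosh 2t\geq e^{2t}/2$. These collapse the tail to $\tfrac{2r}{\sqrt{\pi e}}\,e^{-r^2\tanh(1)/2}$; optimizing $r\mapsto r\,e^{-r^2\tanh(1)/2}$ at $r=(\tanh 1)^{-1/2}$ gives tail bound $2/(e\sqrt{\pi\tanh 1})$. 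The final inequality $2/(e\sqrt{\pi\tanh 1})\leq 1/\sqrt\pi$ is equivalent to $e^2\tanh 1\geq 4$; since $\tanh 1=(e^2-1)/(e^2+1)$ this reduces to $e^4\geq 5e^2+4$, which holds because $e>2.7$ gives $e^2>7$ and hence $e^2(e^2-5)>14>4$.

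The delicate step is this tail estimate, where three decay mechanisms — in $r$ from $\tanh 2t\geq\tanh 1$, in $t$ from the $(\cosh 2t)^{-d/2}$ factor, and uniformity in $d\geq 1$ — must be combined without losing the target constant; the modest numerical inequality $e^2\tanh 1\geq 4$ is exactly what provides enough margin to land at $1/\sqrt\pi$.
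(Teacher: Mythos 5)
Your proof is correct and takes essentially the same route as the paper: Fubini together with the closed form $\frac{1}{\sqrt{\pi}}(\cosh 2t)^{-d/2}\exp\left(-\tfrac12\abs{x}^2\tanh 2t\right)$ for the inner integral (which the paper cites from Dragi\v{c}evi\'c--Volberg (4.4) and you rederive by completing the square), followed by splitting the $t$-integral into a near-origin piece bounded by $\sqrt{2}$ via $\tanh 2t\geqslant t$ and a Gamma integral, and a tail bounded by $1/\sqrt{\pi}$ via the exponential decay of $(\cosh 2t)^{-d/2}$ and optimization over $\abs{x}$. The only differences are cosmetic: you split at $t=\tfrac12$ and close with the explicit check $e^{2}\tanh 1\geqslant 4$, using only $d\geqslant 1$, whereas the paper splits at the implicit $\tau$ solving $\tanh 2\tau=\tau$ and retains the full $e^{-td}$ factor, so its tail bound even improves with $d$.
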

	
	\begin{proof}
		Identity \cite[(4.4)]{dragicevic_volberg} gives
		\[
		\int_{\R^d} K_t(x, y) \, dy = \frac{1}{\sqrt{\pi}} \frac{1}{(\cosh 2t)^{d/2}} \exp\left( -\frac{\abs{x}^2}{2 \coth 2t} \right).
		\]
		
		To estimate the integral with respect to $t$, we need to split it into two parts. Let $\tau \in [0.95, 0.96]$ denote the unique positive solution of $2\coth(2t) = \frac{2}{t}$. It follows that $2\coth(2t) \leqslant \frac{2}{t}$ for $0 \leqslant t \leqslant \tau$. Thus, we obtain
		\begin{equation} \label{eq:45}
		\begin{aligned}
		&\frac{\abs{x}}{\sqrt{\pi}} \int_0^\tau \frac{1}{(\cosh 2t)^{d/2}} \exp\left( -\frac{\abs{x}^2}{2 \coth 2t} \right) \frac{dt}{\sqrt{t}} \leqslant \\
		&\frac{\abs{x}}{\sqrt{\pi}} \int_0^\tau \exp\left( -\frac{t \abs{x}^2}{2} \right) \frac{dt}{\sqrt{t}} \leqslant \frac{\abs{x}}{\sqrt{\pi}} \sqrt{\frac{2 \pi}{\abs{x}^2}} = \sqrt{2}
		\end{aligned}
		\end{equation}
		For the second part, when $t \geqslant \tau$ and $2\coth(2t) \leqslant \frac{2}{\tau}$, calculations are as follows:
		\begin{equation} \label{eq:46}
		\begin{aligned}
		&\frac{\abs{x}}{\sqrt{\pi}} \int_\tau^\infty \frac{1}{(\cosh 2t)^{d/2}} \exp\left( -\frac{\abs{x}^2}{2 \coth 2t} \right) \frac{dt}{\sqrt{t}} \leqslant \\
		&\frac{2^{d/2}\abs{x}}{\sqrt{\pi}} \exp\left( -\frac{\tau \abs{x}^2}{2} \right) \int_\tau^\infty e^{-td} \frac{dt}{\sqrt{t}} \leqslant \\
		&\frac{2^{d/2}e^{-\tau d}}{d \sqrt{\tau \pi}} \abs{x}\exp\left( -\frac{\tau \abs{x}^2}{2} \right) \leqslant \frac{1}{d \tau \sqrt{\pi e}} \leqslant \frac{1}{\sqrt{\pi}}
		\end{aligned}
		\end{equation}
		In the first inequality we used the fact that $\cosh(2t) \geqslant \frac{e^{2t}}{2}$. Combining \eqref{eq:45} and \eqref{eq:46} completes the proof.
	\end{proof}
	
	Now we are ready to prove the main theorem of this section.
	
	\begin{proof}[Proof of \thref{th2}]
		\thref{prop1}, \thref{prop2} and \eqref{eqK} imply that
		\[
		\int_{\R^d} K(x,z) \, dz \leqslant 3 \quad \text{and} \quad \int_{\R^d} K(z,y) \, dz \leqslant 3,
		\]
		hence $T$ is bounded on $L^1$ and $L^\infty$ with norm at most 3. Using the Riesz--Thorin interpolation theorem we obtain $\norm{T}_{p \rightarrow p} \leqslant 3$ for $1 \leqslant p \leqslant \infty$ and since $S = T$ on $\mathcal{D}$ --- a dense subspace of $L^p$ for $1 \leqslant p < \infty$ --- $S$ has a unique bounded extension to $L^p$ with norm at most 3.
	\end{proof}
	
	Recollecting \eqref{eq:R12}, we see that \thref{th2} and \cite[Theorem 9]{wrobel} imply an $L^p$ norm estimate for $\mathbf{\tilde{R}} f = \left( \tilde{R}_1 f, \dots, \tilde{R}_d f \right)$.
	
	\begin{theorem} \thlabel{th3}
		For $f \in L^p$ we have
		\[
		\norm{\mathbf{\tilde{R}} f}_p = \left( \int_{\R^d} \abs{\mathbf{\tilde{R}}f(x)}^p \, dx \right)^{1/p} \leqslant 42 (p^* - 1) \norm{f}_p.
		\]
	\end{theorem}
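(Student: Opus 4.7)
The strategy is essentially to add up the bounds already assembled in this section. Starting from the triangle inequality \eqref{eq:R12}, the plan is to bound $\mathbf{R^1}$ and $\mathbf{R^2}$ separately and then combine.

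First I would handle $\mathbf{R^2}$. Since $|\mathbf{R^2} f(x)| = 2|x||L^{-1/2}f(x)| = 2 Sf(x)$ for $f \in \mathcal{D}$, \thref{th2} gives
\[
\|\mathbf{R^2}f\|_p \leqslant 2\|S f\|_p \leqslant 6 \|f\|_p.
\]
For $\mathbf{R^1}$, note that $R_i^1 = -\delta_i L^{-1/2} = -R_i$, so $\mathbf{R^1}$ is (up to a sign) the vector-valued Riesz transform $\mathbf{R}$ studied in \cite[Theorem 9]{wrobel}. As remarked at the end of Section \ref{sec33}, the constant there can be sharpened from $48(p^*-1)$ to $36(p^*-1)$ by the same trick as in \eqref{eq:th2}, giving
\[
\|\mathbf{R^1}f\|_p \leqslant 36(p^*-1)\|f\|_p.
\]

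Combining the two estimates through \eqref{eq:R12} yields
\[
\|\mathbf{\tilde{R}}f\|_p \leqslant 36(p^*-1)\|f\|_p + 6\|f\|_p.
\]
Since $p^* = \max(p, p/(p-1)) \geqslant 2$, we have $p^* - 1 \geqslant 1$, so $6 \leqslant 6(p^*-1)$, and the right-hand side is bounded by $42(p^*-1)\|f\|_p$. This gives the claim for $f \in \mathcal{D}$, and by density of $\mathcal{D}$ in $L^p$ for $1 < p < \infty$ the estimate extends to all of $L^p$.

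There is no real obstacle here: the work has already been done in \thref{th2} and in the reference \cite[Theorem 9]{wrobel}, and the only thing to verify is the arithmetic $36+6 = 42$ together with the observation $p^*-1 \geqslant 1$. The one minor point I would be careful about is that the density argument requires the bound to hold on the dense subspace $\mathcal{D}$, which is fine because both $S$ on $\mathcal{D}$ and the Wróbel bound for $\mathbf{R}$ on $\mathcal{D}$ are established on exactly this class.
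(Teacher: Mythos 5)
Your proposal is correct and matches the paper's own (implicit) argument exactly: the paper derives \thref{th3} by combining \eqref{eq:R12}, the bound $\norm{\mathbf{R^2}f}_p \leqslant 2\norm{S}_{p\to p}\norm{f}_p \leqslant 6\norm{f}_p$ from \thref{th2}, and the refined constant $36(p^*-1)$ for $\mathbf{R^1}$ from \cite[Theorem 9]{wrobel}, then absorbs $6 \leqslant 6(p^*-1)$. Nothing is missing; the only point worth noting is that the target constant $42$ indeed requires the sharpened constant $36$ rather than $48$, which you correctly invoked.
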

	
	As a corollary of the above result we will prove one more theorem. Let
	\[
	\mathbf{R^*} f = \left( R_1^* f, \dots, R_d^* f \right)
	\]
	with
	\[
	R_i^* f(x) = \delta_i^* (L+2)^{-1/2}f(x).
	\]
	It is worth noting that each $R_i^*$ is the adjoint of $R_i = \delta_i L^{-1/2}$ --- the 'usual' Riesz--Hermite transform. This fact has been already proved in Section \ref{sec33}, the relevant identities are \eqref{eq:adj1} and \eqref{eq:adj2}.
	
	Now we are ready to state the last theorem of this paper.
	
	\begin{theorem} \thlabel{th4}
		For $f \in L^p$ we have
		\[
		\norm{\mathbf{R^*} f}_p = \left( \int_{\R^d} \abs{\mathbf{R^*}f(x)}^p \, dx \right)^{1/p} \leqslant 84 (p^* - 1) \norm{f}_p.
		\]
	\end{theorem}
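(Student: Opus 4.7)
The approach is to factor $\mathbf{R^\ast}$ through $\mathbf{\tilde R}$ and apply \thref{th3}. Introduce the spectral multiplier $T = L^{1/2}(L+2)^{-1/2}$, which acts on $\mathcal{D}$ by $Th_n = \sqrt{\lambda_n/(\lambda_n+2)}\,h_n$. Since all functions of $L$ commute, $(L+2)^{-1/2} = L^{-1/2}\cdot T$ on $\mathcal{D}$, so $R_i^\ast = \delta_i^\ast(L+2)^{-1/2} = \delta_i^\ast L^{-1/2}\cdot T = \tilde R_i\circ T$, and hence $\mathbf{R^\ast}f = \mathbf{\tilde R}(Tf)$ on $\mathcal{D}$ (this can also be checked eigenfunction-by-eigenfunction). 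By \thref{th3} one then has
\[
\norm{\mathbf{R^\ast}f}_p \leqslant \norm{\mathbf{\tilde R}}_{p\to p}\norm{Tf}_p \leqslant 42(p^\ast-1)\norm{Tf}_p,
\]
so it suffices to show the scalar estimate $\norm{T}_{p\to p}\leqslant 2$ for $1 < p <\infty$ and then extend by density.

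To bound $T$, write $T = (I-A)^{1/2}$ with $A = 2(L+2)^{-1}$. First, $A$ is an $L^p$-contraction: from the subordination $(L+2)^{-1} = \int_0^\infty e^{-2t}e^{-tL}\,dt$ and the $L^p$-contractivity of the Hermite heat semigroup $e^{-tL}$ (which follows from the identity in the proof of \thref{prop2}, namely $\int_{\R^d}\sqrt{\pi}K_t(x,y)\,dy = (\cosh 2t)^{-d/2}\exp(-\abs{x}^2/(2\coth 2t))\leqslant 1$, combined with symmetry of $K_t$ and Riesz--Thorin interpolation) one gets $\norm{A}_{p\to p}\leqslant 2\int_0^\infty e^{-2t}\,dt = 1$. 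Second, expand via the binomial series: $(1-x)^{1/2} = 1 + \sum_{k\geqslant 1} c_k x^k$ with $c_k = (-1)^k\binom{1/2}{k}$, and an easy check shows $c_k < 0$ for $k\geqslant 1$. Plugging $x=1$ into the scalar identity yields $\sum_{k\geqslant 1}\abs{c_k} = 1$, so $\sum_{k\geqslant 0}\abs{c_k} = 2$. On each $h_n$ the operator series $\sum_k c_k A^k h_n$ reduces to the convergent scalar series and agrees with $Th_n$; hence for $f\in\mathcal{D}$, $Tf = \sum_k c_k A^k f$ and the triangle inequality gives
\[
\norm{Tf}_p \leqslant \Bigl(\sum_{k\geqslant 0}\abs{c_k}\Bigr)\norm{f}_p = 2\norm{f}_p.
\]

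Putting these together yields $\norm{\mathbf{R^\ast}f}_p\leqslant 84(p^\ast-1)\norm{f}_p$ on $\mathcal{D}$, and by density on all of $L^p$. The main obstacle is the $L^p$-bound on the spectral multiplier $T$: on $L^2$ it is trivial (the spectrum lies in $[0,1]$), but on general $L^p$ it relies on both the positivity/contractivity of the Hermite heat semigroup and the combinatorial fact $\sum_k\abs{c_k}=2$. This constant $2$ is exactly what converts the bound $42(p^\ast-1)$ from \thref{th3} into the claimed $84(p^\ast-1) = 2\cdot 42(p^\ast-1)$.
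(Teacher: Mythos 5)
Your proposal is correct and is essentially the paper's own argument: the factorization $R_i^* = \tilde{R}_i\,(L(L+2)^{-1})^{1/2}$ combined with \thref{th3}, and the bound $\norm{(L(L+2)^{-1})^{1/2}}_{p\to p}\leqslant 2$ obtained exactly as in \thref{propU} via the binomial expansion of $(I-A)^{1/2}$ with $A=2(L+2)^{-1}$ and $\sum_k\abs{c_k}=2$. The only cosmetic difference is that you bound $(L+2)^{-1}$ by subordination to the contractive heat semigroup, whereas the paper writes out the subordinated kernel $\tilde{K}$ explicitly and applies Riesz--Thorin to it --- the same computation packaged differently.
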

	
	To prove this theorem, we perform a slightly more general calculation. For $a > 0$ we define
	\[
	U_a f(x) = \left( L(L+2a)^{-1} \right)^{1/2} f(x), \quad  f \in \mathcal{D}.
	\]
	
	\begin{proposition} \thlabel{propU}
		For $1 \leqslant p < \infty$ we have $\norm{U_a}_{p \rightarrow p} \leqslant 2$.
	\end{proposition}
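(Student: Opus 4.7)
The plan is to realize $U_a$ as $\sqrt{1-R_a}$ with $R_a := 2a(L+2a)^{-1}$ and expand in Taylor series. On each Hermite function one has $U_a h_n = (1 - \tfrac{2a}{\lambda_n+2a})^{1/2} h_n$ with $\tfrac{2a}{\lambda_n+2a} \in (0,1]$, which suggests starting from the scalar identity
\[
\sqrt{1-x} = 1 - \sum_{k=1}^\infty c_k x^k, \qquad c_k := \left|\tbinom{1/2}{k}\right| > 0,
\]
valid for $x \in [0,1]$ by Abel's theorem, and noting that $\sum_{k=1}^\infty c_k = 1$ is obtained by taking $x = 1$. I would then lift this to the operator identity $U_a = I - \sum_{k=1}^\infty c_k R_a^k$ on $\mathcal{D}$, so that once $\|R_a\|_{p\to p} \leqslant 1$ is known, the triangle inequality immediately gives $\|U_a\|_{p\to p} \leqslant 1 + \sum_{k=1}^\infty c_k = 2$.

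The main obstacle is the contractivity of $R_a$ on $L^p$, which I would establish in two moves. First, the Hermite semigroup $T_t := e^{-tL}$ has positive integral kernel $\sqrt{\pi}\, K_t(x,y)$ with $K_t$ given by \eqref{eq:kt}; the identity \cite[(4.4)]{dragicevic_volberg} recalled in the proof of \thref{prop2} gives $\sqrt{\pi} \int_{\R^d} K_t(x,y)\, dy \leqslant 1$ for every $x$, and the symmetry of $K_t$ yields the same estimate integrating in $x$. Schur's test then produces $\|T_t\|_{p \to p} \leqslant 1$ for all $1 \leqslant p \leqslant \infty$, crucially uniformly in the dimension $d$. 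Second, the subordination formula $(L+2a)^{-1} f = \int_0^\infty e^{-2as} T_s f \, ds$, verified on $\mathcal{D}$ by testing against $h_n$, combined with the semigroup contractivity gives $\|(L+2a)^{-1}\|_{p \to p} \leqslant \tfrac{1}{2a}$, hence $\|R_a\|_{p \to p} \leqslant 1$.

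With these ingredients the rest is routine. Since $\|R_a^k\|_{p \to p} \leqslant 1$ and $\sum_k c_k$ converges, the series $\sum_{k=1}^\infty c_k R_a^k$ converges absolutely in operator norm on $L^p$ and defines a bounded operator of norm at most $1$; testing against each $h_n$, where the statement reduces to the scalar Taylor expansion with $x = \tfrac{2a}{\lambda_n+2a}$, shows that this operator agrees with $I - U_a$ on $\mathcal{D}$. Thus $U_a$ extends from $\mathcal{D}$ to a bounded operator on $L^p$ of norm at most $2$, and density of $\mathcal{D}$ in $L^p$ completes the argument.
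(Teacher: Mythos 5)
Your argument is correct and follows essentially the same route as the paper: the binomial-series expansion of $(I-A)^{1/2}$ with $A=2a(L+2a)^{-1}$ and $\sum_k c_k=1$, reduced to the resolvent bound $\norm{(L+2a)^{-1}}_{p\to p}\leqslant \tfrac{1}{2a}$, which both you and the paper obtain from the Mehler kernel bound \cite[(4.4)]{dragicevic_volberg} together with symmetry and Schur-type interpolation. The only cosmetic difference is that you phrase the resolvent bound via contractivity of $e^{-tL}$ plus operator-level subordination, whereas the paper writes the kernel $\tilde K(x,y)=\int_0^\infty e^{-2at}\sqrt{\pi}K_t(x,y)\,dt$ directly and applies Riesz--Thorin; these are the same estimate.
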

	
	\begin{proof}
		We begin with a fact: If $A$ is an operator with $\norm{A} \leqslant 1$, then
		\begin{equation} \label{eq:sqrt}
		\left( I - A \right)^{1/2} = I - \sum_{n=1}^\infty c_n A^n,
		\end{equation}
		where
		\[
		c_n = \frac{\left( 2n \right)!}{\left(n!\right)^2(2n-1) 4^n} \quad \text{and} \quad \sum_{n=1}^\infty c_n = 1.
		\]
		To prove \eqref{eq:sqrt}, note that 
		\begin{equation} \label{eq:sqrt2}
		\sqrt{1-x} = 1 - \sum_{n=1}^\infty c_n x^n \quad \text{for } \abs{x} \leqslant 1,
		\end{equation}
		with $c_n$ defined above and the series converges absolutely, hence the series in \eqref{eq:sqrt} converges for $\norm{A} \leqslant 1$. Squaring identities \eqref{eq:sqrt} and \eqref{eq:sqrt2} and comparing coefficients yields the result.
		
		Next, observe that
		\[
			L(L+2a)^{-1} = I - 2a (L+2a)^{-1},
		\]
		so, taking $A = 2a \left(L + 2a \right)^{-1}$ in \eqref{eq:sqrt}, we see that it is enough to prove that ${\norm{\left( L+2a \right)^{-1}}_{p \rightarrow p} \leqslant \frac{1}{2a}}$. We proceed as in the proof of \thref{th2}. First, we find the kernel of $\left(L+2a\right)^{-1}$, then prove its boundedness on $L^1$ and $L^\infty$ and finally use interpolation.
		
		A computation similar to the proof of \thref{lem8} shows that
		\[
		\left(L+2a\right)^{-1} f(x) = \int_{\R^d} \tilde{K}(x,y) f(y) \, dy \quad \text{for } f \in D,
		\]
		where
		\[
		\tilde{K}(x,y) = \int_0^\infty e^{-2at} \sqrt{\pi}{K}_t(x,y) \, dt
		\]
		and $K_t$ is defined as in \eqref{eq:kt}.
		Since this time the kernel is symmetric, we only prove that
		\[
		\int_{\R^d} \tilde{K}(x,y) \, dy \leqslant \frac{1}{2a}.
		\]
		From \cite[(4.4)]{dragicevic_volberg} and the definition of $\tilde{K}$ we get
		\[
		\int_{\R^d} \tilde{K}(x,y) \, dy \leqslant \int_0^\infty e^{-2at} \, dt = \frac{1}{2a}.
		\]
		This means that the operator $V$ defined as
		\[
		V f(x) = \int_{\R^d} \tilde{K}(x,y) f(y) \, dy
		\]
		is bounded on $L^1$ and $L^\infty$ with norm at most $\frac{1}{2a}$ and the Riesz--Thorin interpolation theorem gives its boundedness on $L^p$ for $1 \leqslant p \leqslant \infty$ with the same upper bound for the norm. Density of $\mathcal{D}$ implies that $\left(L+2a\right)^{-1}$ has a unique bounded extension to the whole $L^p$ space, $1 \leqslant p < \infty$, with norm at most $\frac{1}{2a}$. Applying \eqref{eq:sqrt} with $A = 2a \left(L + 2a \right)^{-1}$ completes the proof.
	\end{proof}
	
	This leads us to the proof of \thref{th4}.
	
	\begin{proof}[Proof of \thref{th4}]
		It is sufficient to note that for $f \in \mathcal{D}$
		\[
		R_i^* f = \delta_i^* \left(L+2\right)^{-1/2} f = \delta_i^* L^{-1/2} \left( L(L+2)^{-1} \right)^{1/2} f = \tilde{R}_i U_1 f.
		\]
		Now \thref{th3} and \thref{propU} complete the proof.
	\end{proof}
	
	Finally, let us mention that in the light of \eqref{eq:l'}, a very similar argument (with $U_d$ instead of $U_1$) can be used to prove \thref{th1} with the constant equal to 84.
	
	\section*{Acknowledgements}
	
	The author is very grateful to Błażej Wróbel for suggesting the topic, supervision and helpful discussions.
	
	The author is also very indebted to the reviewers for careful reading of the manuscript and many valuable comments.
	
	Research was supported by the National Science Centre, Poland, research project No. 2018/31/B/ST1/00204.
	
	The paper constitutes author's master's thesis.


\begin{thebibliography}{99}
		\bibitem{carbonaro_dragicevic}
		A. Carbonaro, O. Dragi\v{c}ević,
		\emph{Bellman function and dimension-free estimates in a theorem of Bakry},
		J. Funct. Anal. 265 (2013), pp. 1085--1104.
		
		\bibitem{carbonaro_dragicevic2}
		A. Carbonaro, O. Dragi\v{c}ević,
		\emph{Functional calculus for generators of symmetric contraction semigroups},
		Duke Math. J. (5) 166 (2017), pp. 937--974.
		
		\bibitem{dragicevic_volberg3}
		O. Dragi\v{c}ević, A. Volberg,
		\emph{Bellman functions and dimensionless estimates of Littlewood-Paley type},
		J. Oper. Theory (1) 56 (2006), pp. 167–198.
		
		\bibitem{dragicevic_volberg4}
		O. Dragi\v{c}ević, A. Volberg,
		\emph{Bilinear embedding for real elliptic differential operators in divergence form with potentials},
		J. Funct. Anal. 261 (2011), pp. 2816--2828.
		
		\bibitem{dragicevic_volberg}
		O. Dragi\v{c}ević, A. Volberg,
		\emph{Linear dimension-free estimates in the embedding theorem for Schr\"odinger operators},
		J. London Math. Soc. (2) 85 (2012), pp. 191--222.
		
		\bibitem{dragicevic_volberg2}
		O. Dragi\v{c}ević, A. Volberg,
		\emph{Linear dimension-free estimates for the Hermite-Riesz transforms},
		\url{https://arxiv.org/abs/0711.2460}.
		
		\bibitem{harboure}
		E. Harboure, L. de Rosa, C. Segovia and J. L. Torrea,
		\emph{$L^p$-dimension free boundedness for Riesz transforms associated to Hermite functions},
		Math. Ann. 328 (2004) pp. 653--682.
		
		\bibitem{kato}
		T. Kato,
		\emph{Schr\"odinger operators with singular potentials},
		Israel J. Math. 13 (1972), pp. 135--148
		
		\bibitem{lust_piquard}
		F. Lust-Piquard,
		\emph{Dimension free estimates for Riesz transforms associated to the harmonic oscillator on $\R^n$},
		Potential Anal. 24 (2006) pp. 47--62.
		
		\bibitem{mauceri_spinelli}
		G. Mauceri, M. Spinelli,
		\emph{Riesz transforms and spectral multipliers of the Hodge-Laguerre Operator},
		J. Funct. Anal. 269 (2015), pp. 3402--3457.
		
		\bibitem{mauceri_spinelli2}
		G. Mauceri, M. Spinelli,
		\emph{Riesz transforms and spectral multipliers of the Hodge-Laguerre Operator},
		\url{https://arxiv.org/abs/1407.2838}.
		
		\bibitem{nazarov_treil}
		F. L. Nazarov, S. R. Treil,
		\emph{The hunt for a Bellman function: applications to estimates for singular integral operators and to other classical problems of harmonic analysis},
		St. Petersburg Math. J. 8 (1997), pp. 721--824.
		
		\bibitem{nist}
		F. Olver, D. Lozier, R. Boisvert, C. Clark,
		\emph{NIST Handbook of Mathematical Functions},
		Cambridge University Press, 2010.
		
		\bibitem{nowak_stempak}
		A. Nowak, K. Stempak,
		\emph{$L^2$-theory of Riesz transforms for orthogonal expansions},
		J. Fourier Anal. Appl. (6) 12 (2006), pp. 675--711.
		
		\bibitem{stein}
		E. M. Stein,
		\emph{Some results in harmonic analysis in $\R^n$, for $n \rightarrow \infty$},
		Bull. Amer. Math. Soc. (N.S.) 9 (1983), no. 1, pp. 71--73.
		
		\bibitem{thangavelu}
		S. Thangavelu,
		\emph{Lectures on Hermite and Laguerre expansions},
		Mathematical Notes 42, Princeton University Press, Princeton, NJ, 1993.
		
		\bibitem{wrobel}
		B. Wróbel,
		\emph{Dimension-free $L^p$ estimates for vectors of Riesz transforms associated with orthogonal expansions},
		Anal. PDE 11 (2018), no. 3, pp. 745--773.
	\end{thebibliography}
\end{document}